\newtheorem{theorem}{Theorem}[section]
\newtheorem{proposition}[theorem]{Proposition}
\newtheorem{lemma}[theorem]{Lemma}
\newtheorem{corollary}[theorem]{Corollary}
\newtheorem{remark}[theorem]{Remark}
\newtheorem{example}[theorem]{Example}
\newtheorem{condition}{Condition}
\renewcommand{\Bbb}[1]{\mathbb{#1}}
\newcommand{\R}{\Bbb{R}}
\renewcommand{\S}{\Bbb{S}}
\newcommand{\B}{\Bbb{B}}
\newcommand{\cU}{\mathcal{U}}
\newcommand{\cV}{\mathcal{V}}
\newcommand{\tX}{\widetilde{X}}
\newcommand{\tZ}{\widetilde{Z}}
\newcommand{\ty}{\tilde{y}}
\newcommand{\hU}{\widehat{U}}
\newcommand{\hV}{\widehat{V}}
\newcommand{\hy}{\hat{y}}
\newcommand{\rank}{r}
\renewcommand{\Im}{\mathop\mathrm{Im}}
\renewcommand{\vec}{\mathop\mathrm{vec}}
\newcommand{\rint}{\mathop\mathrm{rint}}
\newcommand{\Span}{\mathop\mathrm{Span}}
\DeclareMathOperator{\He}{He}
\newenvironment{keywords}%
   {\begin{trivlist}\item[]{\bfseries Keywords:}\ }
   {\end{trivlist}}
\newenvironment{classification}%
   {\begin{trivlist}\item[]{2010 Mathematical subject classification:}\ }
   {\end{trivlist}}
\begin{document}

\title
{Perturbation Analysis of Singular Semidefinite Programs and Its
Applications to Control Problems%
}



\author[1]{Yoshiyuki Sekiguchi\thanks{2-1-6, Etchujima, Koto, Tokyo 135-8533, JAPAN. yoshi-s@kaiyodai.ac.jp}}
\author[2]{Hayato Waki\thanks{744 Motooka, Nishi-ku, Fukuoka 819-0395, JAPAN. waki@imi.kyushu-u.ac.jp}}
\affil[1]{Tokyo University of Marine Science and Technology}
\affil[2]{Institute of Mathematics for Industry, Kyushu University}

\date{}



\maketitle

\begin{abstract}
We consider sensitivity of a semidefinite program  under
 perturbations
 in the case that the primal problem is strictly feasible and the dual problem is weakly feasible.
 When the coefficient matrices are perturbed,
 the optimal values can change discontinuously
 as explained in concrete examples.
 We show that the
 optimal value of such a semidefinite program changes continuously
under conditions involving the behavior of the minimal faces of the perturbed dual problems.
 In addition, we determine what kinds of perturbations keep the minimal
 faces invariant, by using the reducing certificates, which are produced in facial reduction. Our results allow us to classify the behavior of the minimal face of a semidefinite program obtained from a control problem. 
\end{abstract}
 \begin{keywords}
Semidefinite programming, sensitivity, facial reduction,
 minimal face, H-infinity feedback control problem
\end{keywords}
\begin{classification}
90C31, 
90C22, 
90c51, 
93D15 
\end{classification}
%

\section{Introduction}
\label{section:intro}

A \textit{semidefinite program} is the problem of
maximizing
a linear function subject to the constraint
that an affine combination of matrices is positive semidefinite, where the constraint is called a linear matrix inequality.
Semidefinite programs have various applications, such as discrete optimization,
polynomial optimization and control problems (e.g. 
\cite{Anjos12,Scherer06}
). 
If the feasible sets of a semidefinite program and the dual problem satisfy the
constraint qualifications, both of which are called \textit{strict feasibility}, then
interior point methods compute an approximation to
an exact solution efficiently; see, e.g., \cite{deKlerk02,Tuncel10}.

With a lack of strict feasibility, 
interior point methods are numerically unstable and
often give wrong optimal values [5-7].
To avoid such numerical instability,
we can use the technique called \textit{facial reduction},
which finds the minimal face among
the faces of the positive semidefinite cone containing a feasible set.
Such a face is called the \textit{minimal face of a semidefinite
program} [8-11].
Then, we obtain a semidefinite program that satisfies
strict feasibility and has the same optimal value
as the original problem. 
For applications of facial reduction,
see the monograph \cite{Drusvyatskiy17} and the references therein. 

The first contribution of this paper is to provide sufficient conditions
for continuity of the optimal value under perturbations, in the case that the primal problem
is strictly feasible and the dual problem is feasible but not strictly
feasible (Theorem \ref{thm:main2}).
In that case, if we perturb the constant matrix
in the constraint of the primal problem,
then it can be shown from the general theory of convex analysis
that the optimal value changes continuously \cite[Corollary 7.5.1]{Rockafellar70}.
For more detailed analysis, see \cite{Cheung14}.
However if we also perturb the coefficient matrices of the variables, then
the optimal value may change discontinuously (Example \ref{ex:notrank}, \ref{ex:notface}).
Here one of the keys to the phenomenon is the behavior of the minimal face of the dual
problem under the perturbation.
By using concrete examples, we argue that our sufficient conditions are hard to remove.

In the case that both of the primal and dual problems are strictly feasible,
continuity of the optimal value can be shown by Gol'{\v s}hte{\u \i}n \cite[Theorem 17]{Golshtein72}.
Moreover if perturbations are restricted on the constant matrices in the constraint of a semidefinite program, several authors have studied
stability of optimal solutions; see, e.g., [16-18]. 
%
%
%
%
%
Perturbation analysis of general nonlinear programming has been studied thoroughly by Bonnans and Shapiro \cite{Bonnans00}.

The second contribution is to obtain sufficient conditions
for the perturbations to keep the minimal face invariant (Proposition
\ref{prop:invariant1}, \ref{prop:invariant3}). 
If the minimal face does not change under a perturbation, then one of the conditions in Theorem \ref{thm:main2} is satisfied. 
These results give a new insight to perturbation analysis of semidefinite programs.
Here we use \textit{reducing certificates}, which
are generated by facial reduction to find the minimal face \cite{Pataki13}. 
We remark that reducing certificates are often obtained without solving
semidefinite programs if the problems are generated from combinatorial optimization problems,
matrix completion problems, sums of squares problems \cite{Drusvyatskiy17} or $H_\infty$
control problems \cite{Waki15}.
Using these conditions, we investigate a semidefinite program generated from an $H_\infty$ state feedback
control problem.

The organization of this paper is as follows: preliminaries on
semidefinite programs and facial reduction  are given in Section \ref{preliminary}. In Section
\ref{sec:main}, we show the main result on continuity of
the optimal value of a semidefinite program.
In Section \ref{section:face_invariant}, we give sufficient conditions on
the perturbations under which the minimal
face does not change. We devote Section \ref{sec:example} to
applications to a control problem and numerical experiements. The conclusions are given in Section \ref{sec:conclusion}.

\section{Preliminaries on 
 Semidefinite Program and Facial Reduction}\label{preliminary}
 \subsection{\textbf{Semidefinite Program}}
 Let $\mathbb{S}^n$, $\mathbb{S}^n_+$ and $\mathbb{S}^n_{++}$ be the sets of $n\times n$ symmetric matrices, positive semidefinite matrices and  positive definite matrices, respectively.  In this paper, the primal semidefinite program (SDP) \eqref{P} and its dual \eqref{D} are   formulated as follows: 
\begin{align}
\label{P}
&\quad  \sup_{y, Z}\left\{ b^Ty  : A_0 - \sum_{k=1}^m y_k A_k =Z, \ y\in \R^m,\ Z \in\Bbb{S}^n_+\right\}, \tag{$P$}\\
 \label{D}
&\quad\inf_{X}\left\{A_0 \bullet X :  A_k \bullet X = b_k \ (k \in [m]), X \in\Bbb{S}^n_+\right\}, \tag{$D$}
\end{align}
where $A_0, A_1, \ldots, A_m\in\mathbb{S}^n$,  $b\in\Bbb{R}^m$, $[m] := \{1,
\ldots, m\}$, and the inner product $A\bullet B$ is defined by $\sum_{i, j=1}^n A_{ij}B_{ij}$ for $A, B\in\mathbb{S}^n$. 

Problem \eqref{P} is said to be {\itshape strictly feasible} if there exists a feasible solution $(y, Z)$ in \eqref{P} such that $Z\in\mathbb{S}^n_{++}$. Problem \eqref{D} is said to be {\itshape strictly feasible} if there exists a feasible solution $X$ in \eqref{D} such that $X\in\mathbb{S}^n_{++}$.  We say  that \eqref{P} (resp. \eqref{D}) is {\itshape weakly feasible}, if \eqref{P} (resp. \eqref{D}) is feasible but not strictly feasible.

Throughout this paper, we deal with only the case where both \eqref{P} and its dual \eqref{D} are feasible. 
We say that \eqref{P} is {\itshape nonsingular} if both \eqref{P} and \eqref{D} are strictly feasible and the coefficient matrices $A_1, \ldots, A_m$ are linearly independent. We say that \eqref{P} is {\itshape singular} if the coefficient matrices are linear dependent or at least one of \eqref{P} and \eqref{D} is weakly feasible. 

\subsection{\textbf{Facial Reduction for SDP}}
The definition of a face of a general convex set is provided in \cite{Rockafellar70}. The following lemma provides results on a facial structure of $\Bbb{S}^n_+$, e.g. \cite{Borwein81b,Pataki13}. 

  \begin{lemma}\label{facest}
\begin{enumerate}
\item\label{F1} Any face of $\Bbb{S}^n_+$ is either the
     empty set, $\{O_{n\times n}\}$,
     $\Bbb{S}^n_+$, or        
\begin{align*}
&\left\{
Q\begin{pmatrix}
O_{(n-r)\times (n-r)} & O_{(n-r)\times r}\\
O_{r\times (n-r)} & M
\end{pmatrix}Q^T : M\in\Bbb{S}^r_+
\right\}, 
\end{align*}
where $Q$ is an $n\times n$ nonsingular matrix. 
     
\item\label{F2} The set $\Bbb{S}^n_+ + F^{\perp}$
is closed for all faces $F$ of $\Bbb{S}^n_+$, where $F^{\perp}$ stands
     for the set $\{Z\in\Bbb{S}^n : Z\bullet X = 0 \ (\forall X\in
     F)\}$. 
\end{enumerate}
\end{lemma}
We call $Q$ in Part \ref{F1} of Lemma \ref{facest} the \textit{nonsingular matrix associated to the face}.
     It follows from this property that for any $U\in\mathbb{S}^n_+$, 
the set $\Bbb{S}^n_+\cap\{U\}^{\perp}$ is a face of $\Bbb{S}^n_+$,
     where 
     $\{U\}^{\perp} = \{ X\in\Bbb{S}^n : X\bullet U = 0\}$. 
The property given in Part \ref{F2} of Lemma \ref{facest}, which is called niceness, 
     implies that $F^* = \Bbb{S}^n_+ + F^{\perp}$ for all faces $F$ of
     $\Bbb{S}^n_+$. Here $F^*$ is the dual cone of $F$,  i.e.  $F^*=\{Z\in\Bbb{S}^n : Z\bullet X \ge 0 \ (\forall X\in F)\}$.

We define the minimal face of \eqref{D} and introduce facial reduction for \eqref{D}. 
The minimal face of \eqref{D} is defined as the intersection of all faces of $\mathbb{S}^n_{+}$ that contain the feasible region of \eqref{D}. We denote the minimal face by $F_{\min}$. The following result on the minimal face is obtained by \cite{Pataki13} and Part \ref{F2} in Lemma \ref{facest}.

 \begin{lemma}
  \cite[\textrm{SDP version of Section 28.2.6 and Lemma 28.4}]{Pataki13}\label{freduction}
Assume that \eqref{P} and \eqref{D} are feasible. Let $F$ be a face
 of $\Bbb{S}^n_+$ that contains $F_{\min}$ and $\rint F$ be its
 relative interior. Then
 the following are equivalent;
\begin{enumerate}
 \item $F\neq F_{\min};$
 \item
      \label{lemma:freduction_cond}
 There exists $(y, U, V)\in\Bbb{R}^m\times \Bbb{S}^n_+\times
      F^{\perp}$ such that
 \begin{align}
 \label{certificate}
b^Ty &= 0,\quad -\sum_{k\in [m]}y_kA_k = U+V \mbox{ and } U+V \not\in F^{\perp}; 
 \end{align}
 \item $\displaystyle \left\{X\in\rint F : A_k\bullet X = b_k \
       (k\in [m]) \right\}=\emptyset.$
\end{enumerate}
  If $U$ satisfies the system in \ref{lemma:freduction_cond},  
  then we have $F_{\min}\subseteq F\cap \{U\}^{\perp}\subsetneq F$. 
 \end{lemma}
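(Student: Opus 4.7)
The plan is to prove the equivalences via the cycle $(1)\Leftrightarrow (3)\Leftrightarrow (2)$, and then derive the final sandwich claim as a corollary of the construction in $(3)\Rightarrow (2)$. Throughout I would rely on two external inputs already in hand: the characterization of the minimal face via relative interiors (any face containing $F_D$ equals $F_{\min}$ if and only if $F_D$ meets its relative interior), and the niceness property \ref{F2} that gives $F^{*}=\mathbb{S}^{n}_{+}+F^{\perp}$.

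For $(1)\Leftrightarrow (3)$, I would argue as follows. Since $F_{\min}\subseteq F$, if $F=F_{\min}$ then any $X\in F_D\cap \rint F_{\min}$ witnesses failure of (3); conversely, if $F\supsetneq F_{\min}$, then $F_{\min}$ is a proper face of $F$ and is thus contained in the relative boundary of $F$, which forces $F_D\subseteq F_{\min}$ to be disjoint from $\rint F$, i.e.\ (3).

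The main obstacle is $(3)\Rightarrow (2)$. Set the affine set $H=\{X\in\mathbb{S}^{n}\mid A_k\bullet X=b_k,\ k\in K\}$. Condition (3) says $\rint F\cap H=\emptyset$, so I apply the proper separation theorem to obtain $W\in\mathbb{S}^{n}\setminus\{0\}$ and $\alpha\in\mathbb{R}$ with $W\bullet X\le\alpha$ on $F$, $W\bullet X\ge\alpha$ on $H$, and strict inequality somewhere. Because $F$ is a cone containing $0$, I can normalize to $\alpha=0$ and deduce $-W\in F^{*}$; niceness \ref{F2} then produces the decomposition $-W=U+V$ with $U\in\mathbb{S}^{n}_{+}$ and $V\in F^{\perp}$. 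Since the linear functional $W\bullet(\cdot)$ is bounded below on the affine set $H$, it must be constant there, which forces $W\in\operatorname{span}\{A_k\}$; writing $W=\sum_k y_k A_k$ and evaluating on any point of $H$ yields $b^{T}y=W\bullet X\ge 0$. Feasibility of (D) gives an $\bar X\in F_D\subseteq F\cap H$, so $b^{T}y=W\bullet\bar X\le 0$, hence $b^{T}y=0$. Finally, since $W\bullet X=0$ holds identically on $H$, the strict part of the proper separation must occur on $F$, so $W\notin F^{\perp}$, i.e.\ $U+V\notin F^{\perp}$.

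The reverse direction $(2)\Rightarrow (3)$ is a short calculation. Assuming some $X\in\rint F$ satisfies $A_k\bullet X=b_k$, compute $(U+V)\bullet X=-\sum_k y_k A_k\bullet X=-b^{T}y=0$, and use $V\in F^{\perp}$, $X\in F$ to get $V\bullet X=0$, hence $U\bullet X=0$. Since $U\in\mathbb{S}^{n}_{+}\subseteq F^{*}$ and $U$ vanishes at a relative interior point of $F$, the standard face lemma for dual cones forces $U\in F^{\perp}$, whence $U+V\in F^{\perp}$, contradicting (2). For the final sandwich claim, $F\cap\{U\}^{\perp}$ is the intersection of $F$ with the face $\mathbb{S}^{n}_{+}\cap\{U\}^{\perp}$ from \ref{F1}, hence is itself a face of $\mathbb{S}^{n}_{+}$. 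Every $\bar X\in F_D$ satisfies $(U+V)\bullet\bar X=-b^{T}y=0$ and $V\bullet\bar X=0$, so $U\bullet\bar X=0$; thus $F_D\subseteq F\cap\{U\}^{\perp}$, and minimality of $F_{\min}$ yields $F_{\min}\subseteq F\cap\{U\}^{\perp}$. Strictness $F\cap\{U\}^{\perp}\subsetneq F$ is immediate from $U+V\notin F^{\perp}$: some $X\in F$ must have $(U+V)\bullet X\ne 0$, and then $V\bullet X=0$ forces $U\bullet X\ne 0$, so $X\in F\setminus\{U\}^{\perp}$.
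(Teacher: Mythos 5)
The paper itself offers no proof of Lemma \ref{freduction}: it is imported from \cite{Pataki13} (``SDP version of Section 28.2.6 and Lemma 28.4''), so there is no in-paper argument to compare yours against; judged on its own, your proof is correct and is essentially the standard facial-reduction argument underlying the cited result. The equivalence $(1)\Leftrightarrow(3)$ is fine, granted the standard fact you declare as input (the minimal face is the face of $\mathbb{S}^n_+$ containing $F_D$ whose relative interior meets $F_D$; feasibility of (\ref{D}) makes $F_D\neq\emptyset$) together with the fact that a proper face avoids the relative interior. In $(3)\Rightarrow(2)$ the delicate points are all in place: proper separation applies because $\rint H=H$ for the affine set $H$; the cone structure of $F$ with $0\in F$ gives $-W\in F^*$, and niceness \ref{F2} supplies $-W=U+V$ with $U\in\mathbb{S}^n_+$, $V\in F^\perp$; boundedness below of $W\bullet(\cdot)$ on $H$ forces $W\in\langle A_1,\ldots,A_m\rangle$, and evaluating at a point of $F_D\subseteq F\cap H$ pins $b^Ty=0$; finally, since $W$ vanishes identically on $H$, the separation level is forced to be $0$ and $H$ lies inside the separating hyperplane, so properness yields precisely $W\notin F^\perp$ --- your remark that ``the strict part must occur on $F$'' is exactly this point and is valid. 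The converse $(2)\Rightarrow(3)$, via $U\bullet X=0$ at a relative interior point of $F$ forcing $U\in F^\perp$, and the final sandwich $F_{\min}\subseteq F\cap\{U\}^\perp\subsetneq F$ (using that $F\cap\{U\}^\perp$ is a face of $\mathbb{S}^n_+$ containing $F_D$, by \ref{F1} and the computation $U\bullet\bar X=0$ on $F_D$), are also correct; note that no linear independence of $A_1,\ldots,A_m$ is needed anywhere, consistent with how the lemma is used later in the paper.
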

We call the above system \eqref{certificate} the \textit{discriminant system} of the facial reduction
for \eqref{D}, and  a solution $(y, U, V)$ a \textit{reducing certificate}. 

The facial reduction for SDP in e.g. \cite{Pataki13,Waki13}
is a procedure based on Lemma \ref{freduction}.
 It generates a sequence
 $\{F_i\}_{i=0}^s$ of  faces of $\Bbb{S}^n_+$
 such that 
 \begin{align*}
    F_0 = \S^n_+,\ F_{i} &= F_{i-1}\cap\{U^i\}^\perp \ (i=1, \ldots, s) \mbox{ and } F_s =F_{\min}. 
    \end{align*}
Therefore, the iterative process can be expressed as 
    \begin{align*}
     \S_+^n &= F_0 \overset{(y^1, U^1, V^1)}{\longrightarrow} F_1
   \overset{(y^2, U^2, V^2)}{\longrightarrow} F_2
   \overset{(y^3, U^3, V^3)}{\longrightarrow}
   \cdots
   \overset{(y^s, U^s, V^s)}{\longrightarrow} F_s=F_{\min},
 \end{align*}
 where we call $\{(y^i, U^i, V^i)\}_{i=1}^s$ a \textit{facial reduction sequence} for \eqref{D}.
 Here we note that $U^i,V^i$ need to satisfy $U^i+V^i \notin F_{i-1}^\perp$. 
 Examples of facial reduction for SDP can be seen in e.g. \cite[Example 28.3]{Pataki13} and \cite[Example 3.1]{Waki13}. 

If the discriminant system \eqref{certificate} has multiple solutions, then we have flexibility in choosing a facial reduction
sequence for \eqref{D}. Cheung and Wolkowicz \cite[Proposition
B.1]{Cheung14} prove that any two facial reduction sequences must
be of the same length 
when a reducing certificate $(y,U,V)$ is selected at each iteration
so that $U$ has the maximal rank. The length is called the
degree of singularity for \eqref{D}. 
%
The degree of singularity is used in \cite{Cheung14} for the sensitivity analysis of SDPs and in \cite{Sturm00b} for the error bounds.

Although we deal with only the feasible SDPs in the present paper, we introduce a study on the infeasibility briefly. Infeasibility of SDP has two types as well as feasibility, i.e. strong infeasibility and weak infeasibility.  The authors in \cite{Liu18,Lourenco16} discuss a characterization of  infeasibility by facial reduction.

\section{Main Result}\label{sec:main}
\subsection{\textbf{Stability of Singular Semidefinite Programs}}\label{SlaterFailsD}
We define the perturbed problems for \eqref{P} by 
\begin{align}
\label{Pt}  
& \sup_{y, Z}\left\{ b(t)^Ty  : \sum_{k \in [m]} y_k A_k(t) + Z = A_0(t),\
 y\in \R^m,\ Z \in\Bbb{S}^n_+\right\}, \tag{$P_t$}\\
\label{Dt}  
&\inf_{X}\left\{A_0(t) \bullet X :  A_k(t) \bullet X = b_k(t) \ (k \in [m]),\ X \in\Bbb{S}^n_+\right\}, \tag{\text{$D_t$}}
 \end{align}
where $t\ge 0$, $A_k(t)\in \S^n,\ b(t)\in \R^m$ are continuous at $t=0$, and $A_k(0) = A_k,\ b(0) = b$. 

In this subsection, the following conditions are imposed on the initial SDP:
    \begin{condition}
     \label{cond:singular}
   \hspace{1ex}
  \begin{enumerate}[label=(C\arabic*)]
   \item\label{C1} \eqref{D} is feasible, and \eqref{P} is strictly feasible;
   \item\label{C2} $A_1, \ldots, A_m$ are linearly independent.
  \end{enumerate}
    \end{condition}
Then, by applying the  facial reduction to \eqref{D}, there exist a nonsingular
matrix $Q$ and $r\in \Bbb{N}$ such that
{\small %
\begin{align}
 &\inf_{X_3}\left\{Q^TA_0Q \bullet  \begin{pmatrix}
  O & O \\
  O & X_3
 \end{pmatrix} : 
 \begin{array}{l}
 Q^TA_kQ \bullet
 \begin{pmatrix}
  O & O \\
  O & X_3
 \end{pmatrix} = b_k \ (k \in [m]),\ 
   X_3 \in \Bbb{S}^r_+
   \end{array}
 \right\} \label{FD}\tag{\text{$F(D)_0$}} 
\end{align}
}%
 has the same optimal value as \eqref{D}, and \ref{FD} is strictly feasible due to Lemma \ref{freduction}.
 Here, for $n\times n$ matrix $M$, we denote by $M_3$ the right bottom
 block of the partitioning 
\begin{equation}
   M = \begin{pmatrix}
       M_1 & M_2^T \\
       M_2 & M_3
       \end{pmatrix},
       \label{eq:partition}
\end{equation} 
where the partitioning is uniquely determined by Lemma \ref{facest} for the minimal face of \eqref{D} with $M_1 \in \S^{n-r}, M_2 \in \R^{r\times (n-r)}, M_3\in \S^r$.
We call $M_3$ the \textit{third block} of $M$ associated to the minimal face of \eqref{D}.
Then we can rewrite \ref{FD} as follows:
\begin{align}
\label{FD2}
 &\inf_{X}\left\{ (Q^TA_0Q)_3 \bullet X :  
 (Q^TA_kQ)_3 \bullet X = b_k \ (k\in [m]),  X \in\Bbb{S}^r_+ 
 \right\}. \tag{$F(D)$}
\end{align}
For $A=(a_{ij})_{1\le i, j\le n}\in\Bbb{S}^n$,
we define $\vec(A)$ as the vectorization of $A$, i.e., 
\[
\vec(A) = (a_{11}, a_{12}, \ldots, a_{1n}, a_{21}, a_{22}, \ldots, a_{n1}, \ldots, a_{nn})^T. 
\]
Let $\rank(A_1, \ldots, A_m)$ be the rank of the  matrix $(\vec(A_1),  \ldots, \vec(A_m))$.

The following theorem is the main result of this
paper. 
 \begin{theorem}
  \label{thm:main2}
  Under Condition \ref{cond:singular}, suppose that the minimal face $F_{\min}$ of \eqref{D} can be written as
 \[
 F_{\min} = \left\{
 Q
 \begin{pmatrix}
  O_{(n-r)\times(n-r)} & O_{(n-r)\times r} \\
  O_{r\times (n-r)} & X
 \end{pmatrix}Q^T : X \in\Bbb{S}^r_+\right\}
 \]
 for some nonsingular matrix $Q\in\Bbb{R}^{n\times n}$ and $r\in\Bbb{N}$.
 In addition, we suppose that the set $\{(A_0(t), \ldots, A_m(t), b(t)) : 0 \leq t \leq \delta\}$ satisfies the following assumptions for some $\delta>0$:
\begin{enumerate}
 \item\label{thm:sing:cond1} \eqref{Dt} is feasible for each $t\in [0,\delta]$;
 \item\label{thm:sing:cond2} For each $t\in [0,\delta]$,
      there exists a nonsingular matrix $Q(t)$ such that
       $\displaystyle\lim_{t\to 0}Q(t) = Q$, and the minimal face of \eqref{Dt} can
       be written as
       \[
       \left\{
       Q(t)
       \begin{pmatrix}
  O_{(n-r)\times(n-r)} & O_{(n-r)\times r} \\
  O_{r\times (n-r)} & X
 \end{pmatrix}Q(t)^T : X \in\Bbb{S}^r_+\right\};
	\]
 \item\label{thm:sing:cond3} For each $t\in [0,\delta]$, we have 
\[
      \rank \left((Q(t)^TA_1(t)Q(t))_3, \ldots,
       (Q(t)^TA_m(t)Q(t))_3 \right) \\
       = \rank \left( (Q^TA_1Q)_3, \ldots, (Q^TA_mQ)_3
 \right),
\]
where $M_3$ is the third block of $M\in \Bbb{S}^n$ associated with 
      the minimal face of \eqref{D}.
\end{enumerate}
Then the optimal value of \eqref{Dt} varies continuously at $t = 0$.
 \end{theorem}
 %
The following is an immediate corollary.
 \begin{corollary}
  \label{cor:cor1}
 Under Condition \ref{cond:singular}, suppose that there exists $\delta>0$ such that 
 \eqref{Dt}  has a nonempty feasible set
 and the same minimal face as \eqref{D}, and
 \[
\rank \left( (Q^TA_1(t)Q)_3, \ldots,
       (Q^TA_m(t)Q)_3 \right)
       = \rank \left( (Q^TA_1Q)_3, \ldots, (Q^TA_mQ)_3
 \right)
 \]
 for $t\in [0,\delta]$. Then the optimal value of \eqref{Dt} varies continuously at $t = 0$.
 \end{corollary}


Before proceeding to the proof, we investigate examples and
show that the rank condition
or the condition on the face can not be removed from Theorem
\ref{thm:main2} and Corollary \ref{cor:cor1}.

	  \begin{example}
	   \label{ex:notrank}
	  The following example satisfies the condition on the face
	   but does not satisfy the rank condition.
	 We set $b = (0, 2, 2)^T$ and
\[
   A_0 = \left(
 \begin{smallmatrix}
      0 & 0 & 0 & 0\\
	  0 & 0 & 0 & 0\\
  	  0 & 0 & 0 & 0\\
	  0 & 0 & 0 & 1
 \end{smallmatrix}
 \right),\ 
 A_1 = \left(\begin{smallmatrix}
                    1 & 0 & 0 & 0\\
                    0 & 1 & 0 & 0\\
                    0 & 0 & 0 & 0\\
                    0 & 0 & 0 & 0
                    \end{smallmatrix}\right),\ 
  A_2 = \left(\begin{smallmatrix}
  0 & 0 & 0 & 0\\
	0 &  0 & 0 & 0\\
  	0 &  0 & 1 & 0\\
	0 &  0 & 0 & 1
	      \end{smallmatrix}
 \right),
 A_3 = \left(
 \begin{smallmatrix}
    0 & 1 & 0 & 0\\
	1 & 0 & 0 & 0\\
  	0 & 0 & 1 & 0\\
	0 & 0 & 0 & 1
 \end{smallmatrix}
 \right)
\]	 
 in \eqref{P} and \eqref{D}. Then $A_1, A_2, A_3$ are linearly
   independent, $(P)$ is strictly feasible, and $(D)$ is weakly
	 feasible. The optimal value is $0$
	 and an optimal pair is
	 $X = \left(\begin{smallmatrix}
	         0 & 0 & 0 & 0\\
		     0 & 0 & 0 & 0 \\
		     0 & 0 & 2 & 0 \\
		     0 & 0 & 0 & 0 
	 \end{smallmatrix}\right),\ y = (0, 0, 0),\
	 Z = \left(\begin{smallmatrix}
		     0 & 0 & 0 & 0 \\
		     0 & 0 & 0 & 0 \\
		     0 & 0 & 0 & 0 \\
		     0 & 0 & 0 & 1 
	   \end{smallmatrix}\right)$.
The minimal face of \eqref{D} is
\[F_{\min} =\left\{\left(\begin{smallmatrix}
				     O_{2\times 2} & O_{2\times 2} \\
				     O_{2\times 2} & X_3
				    \end{smallmatrix}\right)\in \S^4_+: 
 X_3 \in
\S^2_+ \right\}.\]	 
	 If we perturb the matrices as
\[
   A_i(t) = A_i\ (i=0,1, 2), A_3(t) =
 \left(\begin{smallmatrix}
      0 & 1 & 0 & 0 \\
	  1 & 0 & 0 & 0 \\
  	  0 & 0 & 1 + t & 0\\
	  0 & 0 & 0 & 1 - t
 \end{smallmatrix}\right),
\]
	 then \eqref{Dt} remains feasible for each $t>0$.
	 In fact the feasible points of \eqref{Dt} can be
	 written as $X = \left(\begin{smallmatrix}
	            0 & 0 & 0 & 0 \\
				0 & 0 & 0 & 0 \\
				0 & 0 & 1 & \alpha \\
				0 & 0 & \alpha & 1 
			       \end{smallmatrix}\right)\
	   (-1\leq\alpha\leq 1)$.
	 Thus the minimal face of \eqref{Dt} is equal to $F_{\min}$ for
	   each $t>0$.
	 Now the dimension of the span of the third blocks of
	 the matrices $A_1, A_2, A_3$ is $1$, while that of $A_1(t),
	 A_2(t), A_3(t)$ is $2$ for each $t>0$. 
	   The optimal value of \eqref{Dt} is $1$
	   and the optimal pairs are
	   $X = \left(\begin{smallmatrix}
	            0 & 0 & 0 & 0 \\
				0 & 0 & 0 & 0 \\
				0 & 0 & 1 & \alpha \\
				0 & 0 & \alpha& 1 
			       \end{smallmatrix}\right)\
	   (-1\leq\alpha\leq 1),\
	   y = \left(\beta, \frac{1+t}{2t}, -\frac{1}{2t}\right),\
	   Z = \left(\begin{smallmatrix}
	          -\beta & \frac{1}{2t} & 0 & 0 \\
		      \frac{1}{2t} & -\beta & 0 & 0 \\
		      0 & 0 & 0 & 0\\
		      0 & 0 & 0 & 0
	   \end{smallmatrix}\right)\ \left(\beta\leq -\frac{1}{2t}\right)$ for each $t>0$.
	   Thus the optimal value changes discontinuously at $t=0$.
	  \end{example}

 \begin{example}
  \label{ex:notface}
  The following example satisfies the rank condition but
  does not satisfy the condition on the face.
  We set $b = (2, 2, 2, 0)^T$ and
\[
   A_0 = \left(
 \begin{smallmatrix}
	  0 & 0 & 0\\
  	  0 & 0 & 0\\
	  0 & 0 & 1
 \end{smallmatrix}
 \right),
 A_1 = \left(
 \begin{smallmatrix}
	  0 & 0 & 0\\
  	  0 & 1 & 0\\
	  0 & 0 & 0
 \end{smallmatrix}
 \right),
  A_2 = \left(\begin{smallmatrix}
	  0 & 0 & 0\\
  	  0 & 0 & 1\\
	  0 & 1 & 0
	      \end{smallmatrix}
 \right),
 A_3 = \left(
 \begin{smallmatrix}
	  1 & 0 & 0\\
  	  0 & 0 & 1\\
	  0 & 1 & 0
 \end{smallmatrix}
 \right),  
 A_4 = \left(
 \begin{smallmatrix}
	  0 & 0 & 1\\
  	  0 & 0 & 0\\
	  1 & 0 & 0
 \end{smallmatrix}
 \right) 
\]
 in \eqref{P} and \eqref{D}.
 Then $A_1, \ldots, A_4$ are linearly independent, \eqref{P} is strictly
 feasible, and  
 \eqref{D} is weakly feasible.
 The optimal value is $\frac{1}{2}$, and
 the optimal pairs are
 $X=\left(
 \begin{smallmatrix}
  0 & 0 & 0 \\
  0 & 2 & 1 \\
  0 & 1 & \frac{1}{2}
  \end{smallmatrix}\right),\
  y=\left(-\frac{1}{4}, \frac{1}{2}-y_3,y_3,0\right),
 Z=\left(
 \begin{smallmatrix}
  -y_3 & 0 & 0\\
  0 & \frac{1}{4} & -\frac{1}{2} \\
  0 & -\frac{1}{2} & 1
  \end{smallmatrix}\right),\ y_3\leq0$.
 The minimal face of \eqref{D} is
\[F_{\min} =\left\{\left(\begin{smallmatrix}
				     0 & O_{1\times 2} \\
				     O_{2\times 1} & X_3
				    \end{smallmatrix}\right)\in \S^3_+: 
 X_3 \in
\S^2_+ \right\}.\]  
 If we perturb the matrices as
 \begin{align*}
 & A_i(t) = A_i\ (i=0, 1,2), A_3(t) =
 \left(\begin{smallmatrix}
	  1 & 0 & 0\\
  	  0 & 0 & 1-t^2\\
	  0 & 1-t^2 & 0	
 \end{smallmatrix}\right),
 A_4(t) =
 \left(\begin{smallmatrix}
	  0 & 0 & 1\\
  	  0 & -2t & 0\\
	  1 & 0 & 0	
 \end{smallmatrix}\right), 
 \end{align*}
 then \eqref{Dt} is strictly feasible for each $t>0$. 
  In fact,
   $X=\left(
 \begin{smallmatrix}
	  2t^2 & 0 & 2t\\
  	  0  & 2 & 1\\
	  2t & 1 & 3
  \end{smallmatrix}\right)$ are strict feasible points
  of \eqref{Dt}.
  Thus the minimal face of \eqref{Dt} is $\mathbb{S}^3_+$ for each $t>0$.
 Since the span of the third blocks of the matrices $A_1(t),\ldots, A_4(t)$ has the same basis
 as that of $A_1,\ldots, A_4 $ for each $t>0$, the rank condition is satisfied.
 However the optimal value of \eqref{Dt} is $2$ with
$X=\left(
 \begin{smallmatrix}
	  2t^2 & t & 2t\\
  	  t & 2 & 1\\
  2t & 1 & 2
 \end{smallmatrix}\right),\ 
 y=\left(2, \frac{1}{t^2} - 1,-\frac{1}{t^2},\frac{1}{t}\right),
 Z=\left(
 \begin{smallmatrix}
	  \frac{1}{t^2} & 0 & -\frac{1}{t}\\
  	  0  & 0 & 0\\
	  -\frac{1}{t} & 0 & 1
 	  \end{smallmatrix}\right)
$
 being the unique optimal pair for each $t>0$. Thus the optimal value changes discontinuously at $t=0$.


 \end{example}

 \begin{example}\label{ex:theorem}
  Consider the same SDP as in Example \ref{ex:notface}.
   If we perturb the matrices as
\[
 A_i(t) = A_i\ (i=0, 1,4), A_2(t) =
 \left(\begin{smallmatrix}
	  0 & 0 & 0\\
  	  0 & 0 & 1\\
	  0 & 1 & t	
  \end{smallmatrix}\right),\ 
 A_3(t) = \left(\begin{smallmatrix}
	  1 & 0 & 0\\
  	  0 & 0 & 1\\
	  0 & 1 & t	
 \end{smallmatrix}\right),
\]
  the minimal face of each perturbed problem is equal to
  $F_{\min}$ in Example \ref{ex:notface}.  
  Here the condition on the face and the rank condition are satisfied for
  sufficiently small $t>0$. Thus Theorem \ref{thm:main2} guarantees the continuity of 
  the optimal value.
  In fact, the optimal value of \eqref{Dt} is $\frac{ 2t+4 -  4\,\sqrt{t+1}}{t^2}$
  and converges to $\frac{1}{2}$ as $t\to 0$. 
  The optimal pairs are %
{\small %
  \begin{align*}
 & X = \left(\begin{smallmatrix}
	  0&0&0\cr 0&2&{{2\,\sqrt{t+1}-2}\over{t}}\cr 0&{{2\,\sqrt{t
 +1}-2}\over{t}}&{{ 2t+4 -  4\,\sqrt{t+1}}\over{t^2}}\cr
  \end{smallmatrix}\right),\
  y = \textstyle{\left(-\frac{\sqrt{t+1}\,\left(t+2\right)-2\,t-2}{t^3+t^2}, 
 \beta, -\frac{\sqrt{t+1}+\beta\,t^2+\left(
 \beta-1\right)\,t-1}{t^2+t}, 0
  \right)},\\ 
& Z = \left(\begin{smallmatrix}
	   {{\sqrt{t+1}+ \beta\,t^2+\left(\beta-1
 \right)\,t-1}\over{t^2+t}}&0&0\cr 0&{{\sqrt{t+1}\,\left(t+2\right)-2
 \,t-2}\over{t^3+t^2}}&{{\sqrt{t+1}-t-1}\over{t^2+t}}\cr 0&{{\sqrt{t+
 1}-t-1}\over{t^2+t}}&{{1}\over{\sqrt{t+1}}}\cr
	  \end{smallmatrix}\right)
\end{align*}
} %
   for all $\beta$ such that $(1,1)$st element of $Z$ is nonnegative.
 \end{example}

 \begin{proof}[Proof of Theorem \ref{thm:main2}.]
 By the assumptions \ref{thm:sing:cond1} and \ref{thm:sing:cond2} in Theorem \ref{thm:main2},
 the optimal value of \eqref{Dt} is equal to
\[
  \inf_{X}\left\{ (Q(t)^TA_0(t)Q(t))_3 \bullet X : \right.\\
  \left. (Q(t)^TA_k(t)Q(t))_3 \bullet X = b_k(t)\ (k\in [m]), 
  X \in\Bbb{S}^r_+
 \right\}\label{FDt}\tag{\text{$F(D_t)$}},
\]
 and \ref{FDt} has a nonempty feasible set for each $t\in [0,\delta]$.
 Thus if continuity of
 the optimal value of \ref{FDt} at $t = 0$ is shown,
 then that of the optimal value of \eqref{Dt} is also shown.
 For each $t\in [0,\delta]$, we have that the dual of \ref{FDt} is
 \begin{align}
 & \sup_{y, Z}\left\{b(t)^Ty : 
  \displaystyle\sum_{k\in [m]} y_k(Q(t)^TA_k(t)Q(t))_3 + Z =  (Q(t)^TA_0Q(t))_3, 
  Z\in\Bbb{S}^r_+
 \right\}. \label{FDtd}\tag{\text{$F(D_t)^{\prime}$}}
\end{align}
 Then \ref{FDt} has the same optimal value as \ref{FDtd} 
 because \ref{FDt} and \ref{FDtd} are
 strictly feasible. 
 In fact, strict feasibility of \ref{FDt} follows from the properties of facial reduction. 
Since, for a strictly feasible point $(\tilde{y}, \tilde{Z})$ of \eqref{Pt}, $(\tilde{y}, (Q(t)^T\tilde{Z}Q(t))_3)$ is also a strictly feasible point of \ref{FDtd}, and hence \ref{FDtd} is strictly feasible.
 Therefore, the proof is done by showing Theorem \ref{thm:contopt_rank}. \hspace{\fill} \qed
 \end{proof}

 \begin{theorem}
  \label{thm:contopt_rank}
  If both  \eqref{P} and \eqref{D} are strictly feasible, 
  \eqref{Dt} is feasible, and
 $\rank\left(A_1(t),\ldots, A_m(t)\right)
 =  \rank \left( A_1,\ldots, A_m \right)$ for each sufficiently
  small $t>0$,
  then the optimal value of \eqref{Dt} varies continuously at $t=0$.
 \end{theorem}
 
  We will prove Theorem \ref{thm:contopt_rank} in Subsection
 \ref{subsection:Stab}.  
 \begin{remark}
  The coefficient matrices $A_1 ,\ldots, A_m$ in \eqref{P} are usually assumed to be linearly independent in the literature.
  However the coefficient matrices in 
  \ref{FD2}
  can be linearly dependent even if the initial SDP has linearly independent constraints.  In fact, the coefficient matrices of the reduced SDPs are linearly dependent in Examples \ref{ex:notrank}, \ref{ex:notface} and \ref{ex:theorem}. Thus we need to consider SDPs with linearly dependent coefficient matrices in Theorem \ref{thm:contopt_rank}. 
 \end{remark}
 
 As in Example \ref{ex:notrank} and \ref{ex:notface}, 
 if $\rank\left(A_1(t),\ldots, A_m(t)\right)=\rank \left( A_1,\ldots, A_m \right)$ and \eqref{D} is weakly feasible,
 then the optimal value of \eqref{Dt} can vary
 discontinuously.
 We present an additional example and show that
 the feasibility condition on \eqref{Dt} or the rank condition
 can not be removed from Theorem \ref{thm:contopt_rank}.
 
  \begin{example}
   \label{ex:perturb_infeasible}
  In \eqref{P} and \eqref{D}, we set $b=(2, 2)^T$, 
  \[
   A_0 = \begin{pmatrix}
	  0 & 0 \\
	  0 & 1
  \end{pmatrix},\
  A_1 = \begin{pmatrix}
	 1 & 0 \\
	 0 & 1
	\end{pmatrix},
  A_2 = \begin{pmatrix}
	 1 & 0 \\
	 0 & 1 
	\end{pmatrix}.
  \]
  Then \eqref{P} and \eqref{D} are strictly feasible.
   The optimal value is $0$, and the optimal pairs are $X = \left(\begin{smallmatrix}
				     2 & 0 \\
				     0 & 0
   \end{smallmatrix}\right),\
   y = \left(\alpha, -\alpha\right),\
   Z = \left(\begin{smallmatrix}
	      0 & 0 \\
	      0 & 1
	     \end{smallmatrix}\right)$ for any $\alpha \in \R$.
  However, if we take 
  $A_2(t) = \left(\begin{smallmatrix}
	  1 + t & 0\\
	  0 & 1 + t 
  \end{smallmatrix}\right)$,
   then $r(A_1(t), A_2(t)) = r(A_1, A_2)
  = 1$ but
   \eqref{Dt} is infeasible.
   Therefore feasibility of \eqref{Dt} can not be derived from the rank
   condition and needs to be assumed.

   On the other hand, if we take
   $A_2(t) = \left(\begin{smallmatrix}
		    1 + t & 0 \\
		    0 & 1 - t
   \end{smallmatrix}\right)$, then
   \eqref{Dt} is feasible and
   $r(A_1(t), A_2(t)) = 2$ for all $t>0$.
   The optimal value is $1$, and the optimal pair
   is $X = \left(\begin{smallmatrix}
		  1 & \beta \\
		  \beta & 1
   \end{smallmatrix}\right)\ (-1\leq \beta \leq 1),\
   y = \left(\frac{1+t}{2t},-\frac{1}{2t}\right),\
   Z = \left(\begin{smallmatrix}
	      0 & 0 \\
	      0 & 0 
	     \end{smallmatrix}\right)
   $. Thus the optimal value varies discontinuously at $t=0$ without the rank condition.
  \end{example}

\subsection{\textbf{Proof of Theorem \ref{thm:contopt_rank}}}    
\label{subsection:Stab}



%
%
First, we recall an existence theorem for optimal solutions to an SDP with a focus on the linear independence of the coefficient matrices.

 \begin{theorem}
  \cite[Theorem 4.1 and Corollary 4.1]{Todd01}
  \label{thm:Todd01}
 Suppose \eqref{P} is strictly feasible and \eqref{D} is feasible.
  Then \eqref{D} has a nonempty compact optimal set
  and the same optimal value as \eqref{P}.
Also, suppose that \eqref{P} is feasible and \eqref{D} is strictly
  feasible. If the coefficient matrices $A_1, \ldots, A_m$
  are linearly independent, then \eqref{P} has a nonempty compact optimal set and the same optimal value as \eqref{D}. 
 \end{theorem}
    \begin{remark}
     \label{remark:Todd01}
     \begin{enumerate}
    \item Suppose that    
    \eqref{P} is feasible and \eqref{D} is strictly feasible.
    However, we do not assume that the coefficient matrices $A_1,\ldots, A_m$
    are linearly independent. 
    Then easy arguments show that \eqref{P} has a nonempty optimal set and the same optimal value as \eqref{D}. 
    Here we lost the compactness of the optimal set of \eqref{P}.
    
   \item The set of the optimal
   solutions $(y, Z)$ of \eqref{P} is unbounded
   when the matrices $A_1, \ldots, A_m$ are linearly dependent.
   However Lemma
   \ref{lemma:unif_bdd} bellow tells that the image of the optimal solutions under the projection $(y,Z)\mapsto Z$ is bounded if \eqref{P} and \eqref{D} are strictly feasible.
   \end{enumerate}
  \end{remark}

We note that we do not assume the linear independence of the coefficient
matrices $A_1,\ldots, A_m$ in the following arguments.
We will use the symbol $S(t) =(\vec\left(A_1(t)\right), \ldots,
\vec\left(A_m(t)\right))\in\Bbb{R}^{n^2\times m}$ and the symbol $(S(t)^T)^\dagger$
for the Moor-Penrose generalized inverse of $S(t)^T$ \cite{HornJohnson13}. 
 \begin{lemma}
  \label{thm:contP}
 Suppose $X_0$ is a strictly feasible point of \eqref{D}.
 If \eqref{Dt} is feasible and\\
 $\rank\left(A_1(t),\ldots, A_m(t)\right)
 =  \rank\left(A_1,\ldots, A_m\right)$ for each $t\in [0,\delta]$, 
  then there exist strictly feasible points $X_t$ of
  \eqref{Dt} for all sufficiently small $t>0$
  such that $X_t \to X_0$ as $t \to 0$.
 \end{lemma}
   \begin{proof}
We can write the equality constraints of \eqref{D} and \eqref{Dt} by  $S(0)^T\vec(X) = b$ and $S(t)^T\vec(X) = b(t)$, respectively. 
Note that $A_k(0) = A_k\ (k\in [m]),\ b = b(0)$. We set 
\begin{align*}
\vec(X_0) &= (I - (S(0)^T)^\dagger S(0)^T)\vec(X_0) + (S(0)^T)^\dagger b(0) \mbox{ and } \\\vec(X_t) &=
   (I - (S(t)^T)^\dagger
   S(t)^T)\vec(X_0)
   + (S(t)^T)^\dagger b(t). 
\end{align*}
Then we can check that $S(0)^T\vec(X_0) = b$ and $S(t)^T\vec(X_t) = b(t)$, by using the fact that $S(t)^T(S(t)^T)^\dagger v = v$ if and only if $v\in \Im S(t)^T$. Since we have $\mbox{rank}(S(t)) = \rank (A_1(t), \ldots, A_m(t))$ for all $t\ge 0$, it follows from the assumption on the  rank and \cite[Theorem
   5.2]{Stewart69} that $(S(t)^T)^\dagger \to (S(0)^T)^\dagger$ as $t\to 0$. Therefore $X_t\to X_0$ as $t\to 0$. \hspace{\fill} \qed
   \end{proof}

 \begin{remark}
  \label{remark:s_feasible_P}
  Unlike \eqref{Dt},
  we can easily prove that \eqref{Pt} have strictly feasible points $(y_t,
  Z_t)$ for all sufficiently small $t\geq0$ without assuming the rank condition.
 If \eqref{P} is strictly feasible, there exists $y_0\in \R^m$ such that $A_0 - \sum_k y_{0,k} A_k \in \Bbb{S}_{++}^n$. Then we have that $Z_t:=A_0(t) - \sum_k y_{0,k} A_k(t)\in\Bbb{S}_{++}^n$ for all sufficiently small $t\geq0$. For each $t>0$, $(y_0, Z_t)$ is a strictly feasible point of \eqref{Pt} and converges to a strict feasible point of \eqref{P}. 
 \end{remark}


Let $\cU(t)$ be the set of optimal solutions of \eqref{Dt}, and
\[
 \cV(t) = \{Z\in \S^n : (y,Z) \text{ is optimal to \eqref{Pt}}
 \ \text{for some $y\in \R^m$}\}.
\]
  \begin{lemma}
   \label{lemma:unif_bdd}
   Suppose that \eqref{P} is strictly feasible.
   If there exist strictly feasible points $X_t$ of \eqref{Dt}
   for all sufficiently small
   $t\geq0$ such that $X_t \to X_0$ as $t \to 0$,
   then both sets $\cU(t)$ and $\cV(t)$ are nonempty and uniformly bounded;
  i.e., there exist $\delta>0$ and  compact sets $C_1, C_2$ such that
 \[
  \cU(t) \subset C_1,\ \cV(t) \subset C_2\quad (0 \leq t \leq \delta).
 \]
  \end{lemma}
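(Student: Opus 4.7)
The plan is to combine strong duality (Theorem \ref{thm:nonempty}) with standard compactness-via-strict-feasibility estimates, pushed through uniformly in $t$ using continuity.

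First I would establish nonemptiness on a common interval. Since (\ref{P}) is strictly feasible, Remark \ref{remark:s_feasible_P} yields a vector $y_0\in\R^m$ and $\delta_1>0$ such that $Z(t):=A_0(t)-\sum_k y_{0,k}A_k(t)\in \S_{++}^n$ for all $t\in[0,\delta_1]$; thus $(y_0,Z(t))$ is strictly feasible for (\ref{Pt}). Combined with the hypothesized strictly feasible point $X_t$ of (\ref{Dt}) with $X_t\to X_0$, Theorem \ref{thm:nonempty} gives that $U(t)$ and $V(t)$ are nonempty and that strong duality holds, so the common optimal value $\theta(t)=A_0(t)\bullet X=b(t)^Ty$ for any $X\in U(t)$ and any $(y,Z)\in \R^m\times V(t)$ with $(y,Z)$ optimal for (\ref{Pt}).

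Next I would bound $U(t)$. For any $X\in U(t)$, expanding $Z(t)\bullet X$ using the constraint $A_k(t)\bullet X=b_k(t)$ gives
\begin{equation*}
Z(t)\bullet X = A_0(t)\bullet X - \sum_k y_{0,k}\, b_k(t) = \theta(t) - b(t)^T y_0.
\end{equation*}
By weak duality, $\theta(t)\le A_0(t)\bullet X_t$, so $Z(t)\bullet X \le A_0(t)\bullet X_t - b(t)^T y_0$. Continuity of $A_0(t),b(t),X_t$ at $0$ makes the right-hand side uniformly bounded on some $[0,\delta_2]$. Because $Z(t)\to Z(0)\succ 0$, the smallest eigenvalue $\lambda_{\min}(Z(t))$ is bounded below by some $\mu_1>0$ uniformly on a possibly smaller interval, hence
\begin{equation*}
\mu_1\,\tr(X) \le Z(t)\bullet X \le M_1,
\end{equation*}
giving a uniform bound on $\tr(X)$. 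Since $X\succeq 0$, this bounds $\|X\|_F$, so $U(t)$ is contained in a fixed compact set $C_1$.

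The argument for $V(t)$ is symmetric: for any $(y,Z)$ optimal for (\ref{Pt}),
\begin{equation*}
X_t\bullet Z = X_t\bullet A_0(t) - \sum_k y_k\,(A_k(t)\bullet X_t) = X_t\bullet A_0(t) - b(t)^T y = X_t\bullet A_0(t) - \theta(t),
\end{equation*}
and weak duality (applied to the feasible dual point $y_0$) gives $\theta(t)\ge b(t)^T y_0$, so $X_t\bullet Z \le X_t\bullet A_0(t) - b(t)^T y_0$, which is uniformly bounded. Since $X_t\to X_0\succ 0$, we have $\lambda_{\min}(X_t)\ge \mu_2>0$ for small $t$, yielding a uniform bound on $\tr(Z)$ and hence on $\|Z\|_F$. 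Taking $\delta$ to be the minimum of the three shrinking intervals above and $C_1,C_2$ the corresponding closed Frobenius balls intersected with $\S_+^n$ gives the statement.

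The only real subtlety is ensuring that $\lambda_{\min}(Z(t))$ and $\lambda_{\min}(X_t)$ stay bounded away from zero on a common interval; this is immediate from continuity of eigenvalues together with $Z(0),X_0\succ 0$, so there is no substantial obstacle.
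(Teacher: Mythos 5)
Your proof is correct and follows essentially the same route as the paper's: nonemptiness via Remark \ref{remark:s_feasible_P} and Theorem \ref{thm:nonempty}, then bounding $Z(t)\bullet X$ and $X_t\bullet Z$ by quantities converging to values determined by the strictly feasible reference points, and dividing by a uniform lower bound on $\lambda_{\min}$. The only cosmetic difference is that the paper derives the single identity $X\bullet Z_t + X_t\bullet Z = X_t\bullet Z_t$ from complementarity and bounds both terms at once, whereas you bound each set separately by sandwiching the optimal value between $b(t)^Ty_0$ and $A_0(t)\bullet X_t$; the two computations are equivalent.
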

 \begin{proof}
  Since \eqref{Dt} and \eqref{Pt} have strictly feasible points, Remark 
  \ref{remark:Todd01} ensures that they have the same optimal
  value and that $\cU(t)$ and $\cV(t)$ are nonempty for all sufficiently small $t\ge 0$. 
  For a strictly feasible point $(y_0, Z_0)$ of \eqref{P},
  we  set
  $  y_t = y_0$ and $Z_t = A_0(t) - \sum_ky_{0,k}A_k(t)$. 
  Then $(y_t, Z_t)$ is a strictly feasible point of
  \eqref{Pt} for each small $t\geq 0$ as explained in Remark \ref{remark:s_feasible_P}.
  Let $X$ and $(y, Z)$ be arbitrary optimal solutions to \eqref{Dt} and
  \eqref{Pt} respectively.
 Since $X_t$ and $(y_t, Z_t)$ are feasible points, we have
 \[
 A_k(t) \bullet (X - X_t) = 0,\ \sum_{k \in [m]} (y_k - y_{t,k}) A_k(t)
  + Z - Z_t = 0.
 \]
 Then it follows that 
  $(X - X_t) \bullet (Z - Z_t) = 0$ 
 and hence that
  $X \bullet Z_t + X_t \bullet Z = X_t \bullet Z_t$. 
 Moreover, positive semidefiniteness of $X_t$ and $Z$ guarantees that 
  $X \bullet Z_t \leq X_t \bullet Z_t$. 
 Thus, by positive definiteness
  of $Z_t$,  there exists $\epsilon>0$
 such that for all sufficiently small $t>0$, we have
 \[
 \|X\| \leq \frac{X_t \bullet Z_t}{\lambda_{\min}(Z_t)}
 < \frac{X_0 \bullet Z_0 + \epsilon}{\lambda_{\min}(Z_0) - \epsilon},
 \]
  where $\lambda_{\min}(M)$ is the smallest eigenvalue of a
  matrix $M$.
 Therefore, $\cU(t)$ is uniformly bounded for all sufficiently small $t>0$.
 Similar arguments are applied to $\cV(t)$. \hspace{\fill} \qed
 \end{proof}

The following lemma is well-known, and the proof is omitted.
 \begin{lemma}
  \label{lemma:minimax}
 Suppose that \eqref{D} has the same optimal value as \eqref{P} and that both
 of \eqref{D} and \eqref{P} have optimal solutions. We define the function $L :
  \Bbb{S}^n\times\Bbb{R}^m \to \Bbb{R}$ as follows: 
 \[
  L(X, y) = A_0 \bullet X + \sum_{k\in [m]} y_k(b_k - A_k
 \bullet X). 
 \]
  Then $\tX$ and $(\ty, A_0 - \sum_k \ty_kA_k)$ are optimal solutions of
  \eqref{D} and \eqref{P} respectively  if and only if $(\tX, \ty)\in\Bbb{S}^n_+\times \Bbb{R}^m$  satisfies 
 \[
  L(\tX, y) \leq L(\tX, \ty) \leq L(X, \ty),\ \forall (X,
 y) \in \S^n_+ \times \R^m.
 \]
 \end{lemma}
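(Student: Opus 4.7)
The plan is to prove the two directions separately, using the substitution $\widetilde{Z} := A_0 - \sum_k \tilde{y}_k A_k$ and the elementary identity
\[
L(X,y) = A_0\bullet X + \sum_{k\in K}y_k(b_k - A_k\bullet X) = \Bigl(A_0 - \sum_k y_k A_k\Bigr)\bullet X + b^Ty,
\]
which lets me rewrite the Lagrangian in two forms that separate the (D)-constraints from the (P)-constraints.

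For the ``only if'' direction, I would assume $\tX$ is optimal for (D) and $(\ty, A_0 - \sum_k \ty_k A_k)$ is optimal for (P). Then $A_k \bullet \tX = b_k$ holds for every $k$, so the terms $(b_k - A_k\bullet\tX)$ vanish; consequently $L(\tX,y) = A_0\bullet\tX = L(\tX,\ty)$ for every $y\in\R^m$, giving the left inequality as an equality. For the right inequality, I would use the alternative form of $L$ to get $L(X,\ty) = \widetilde{Z}\bullet X + b^T\ty$ with $\widetilde{Z}\in\S_+^n$; since $X\in\S_+^n$ and $\widetilde{Z}\in\S_+^n$ implies $\widetilde{Z}\bullet X\geq 0$, and since the assumed equality of optimal values gives $L(\tX,\ty) = A_0\bullet\tX = b^T\ty$, we conclude $L(\tX,\ty) = b^T\ty \leq b^T\ty + \widetilde{Z}\bullet X = L(X,\ty)$.

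For the ``if'' direction, I would extract feasibility and complementarity from the saddle-point inequalities. From $L(\tX,y) \leq L(\tX,\ty)$ for all $y \in \R^m$, the linear function $y\mapsto\sum_k y_k(b_k - A_k\bullet\tX)$ attains its maximum on $\R^m$ at $\ty$; since this is only possible when the vector $(b_k - A_k\bullet\tX)_{k\in K}$ is identically zero, $\tX$ is feasible for (D). From $L(\tX,\ty)\leq L(X,\ty)$ rewritten as $\widetilde{Z}\bullet X \geq \widetilde{Z}\bullet\tX$ for all $X\in\S_+^n$, testing $X = 0$ gives $\widetilde{Z}\bullet\tX\leq 0$, and testing $X = \tX + Y$ with arbitrary $Y\in\S_+^n$ gives $\widetilde{Z}\bullet Y \geq 0$ for all $Y\in\S_+^n$, hence $\widetilde{Z}\in\S_+^n$ (so $(\ty,\widetilde{Z})$ is feasible for (P)) and moreover $\widetilde{Z}\bullet\tX = 0$. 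The complementarity identity then yields $A_0\bullet\tX = \widetilde{Z}\bullet\tX + b^T\ty = b^T\ty$, so by weak duality the common value is the optimal value of both problems and $\tX, (\ty,\widetilde{Z})$ are optimal.

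No step is expected to be an obstacle since this is the standard saddle-point equivalence; the only care required is in the reverse direction, where one must correctly deduce both $\widetilde{Z}\succeq O$ and $\widetilde{Z}\bullet\tX = 0$ from the second inequality, rather than only one of them.
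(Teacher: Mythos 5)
Your proposal is correct and follows essentially the same route as the paper's proof: both directions rest on rewriting $L$ as $\bigl(A_0-\sum_k y_kA_k\bigr)\bullet X + b^Ty$, extracting feasibility of $\tX$ from the first inequality, feasibility of $(\ty,\tZ)$ plus complementarity $\tZ\bullet\tX=0$ from the second, and concluding optimality via equal objective values and weak duality. Your explicit test points $X=0$ and $X=\tX+Y$ merely spell out what the paper leaves terse, so there is no substantive difference.
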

 \begin{lemma}
  \label{lemma:pinverse}
 Let $S$ be a matrix $\left(\vec(A_1)\ \ldots\ \vec(A_m)\right) \in \R^{n^2\times m}$. 
 If $(\ty,\tZ)$ is an optimal solution to \eqref{P},
 then $(y_*, \tZ)$ is also an optimal solution to \eqref{P}, where $y_* = S^\dagger (\vec(A_0) - \vec(\tZ))$.
 \end{lemma}
   \begin{proof}
   By feasibility of $(\ty, \tZ)$, we have $S \ty = \vec(A_0) - \vec(\tZ)$.
   Since $SS^\dagger v  = v$ if and only if  $v \in \Im S$,
   we see that $Sy_* = \vec(A_0) - \vec(\tZ)$.
   Then we obtain $y_* \in \ty + \ker S$.
    Here we have $\ker S \subset (\Span\{b\})^\perp$
    since otherwise the optimal value of \eqref{P} is infinity and hence this contradicts finiteness of the optimal value.
    Thus $b^Ty_* = b^T\ty$, and therefore, $(y_*, \tZ)$ is optimal. \hspace{\fill} \qed
   \end{proof}

Lemma \ref{thm:stability} plays an essential role in the proof of
Theorem \ref{thm:contopt_rank}. Lemma \ref{lemma:unif_bdd} and \ref{thm:stability} ensure outer semicontinuity of the set-valued map $t\mapsto \cU(t)\times \cV(t)$; see \cite[Section 5.B]{RockWets98}. 
In the following,  $\B$ denotes the closed unit ball in $\S^n$. We define, for $X\in \S^n$ and $C\subset \S^n$, 
\[
 d(X,C) = \inf\{\|X - Y\|:Y \in C\}.
\]

  \begin{lemma}
   \label{thm:stability}
   Suppose that \eqref{P} is strictly feasible.
   If there exist strictly feasible points $X_t$ of \eqref{Dt}
   for all sufficiently small $t \geq 0$ such that $X_t \to X_0$ as $t\to 0$, then 
 for any $\epsilon>0$, there exists $\eta>0$ such that
 \[
 \cU(t) \subset \cU(0) + \epsilon \B,\ \cV(t) \subset \cV(0) + \epsilon \B
 \quad (0 \leq t \leq \eta).
 \]
  \end{lemma}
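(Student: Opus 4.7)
The plan is to prove upper semicontinuity of the optimal-solution maps $t\mapsto U(t)$ and $t\mapsto V(t)$ at $t=0$ by a sequential contradiction argument combined with an SDP complementarity passage to the limit.

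Suppose the first inclusion fails: there exist $\epsilon>0$, a sequence $t_n\downarrow 0$, and $X_n\in U(t_n)$ with $d(X_n,U(0))\geq \epsilon$. By Remark \ref{remark:s_feasible_P}, $(P_{t_n})$ is strictly feasible for large $n$, so together with the strict feasibility of $(D_{t_n})$ Theorem \ref{thm:nonempty} gives $V(t_n)\neq\emptyset$; pick $Z_n=A_0(t_n)-\sum_k y_{n,k}A_k(t_n)\in V(t_n)$. Lemma \ref{lemma:unif_bdd} then supplies uniform boundedness of both $\{X_n\}$ and $\{Z_n\}$, so after passing to a subsequence, $X_n\to X^*$ and $Z_n\to Z^*$.

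Passing to the limit in $A_k(t_n)\bullet X_n=b_k(t_n)$ and $X_n,Z_n\succeq 0$ shows that $X^*$ is feasible for (\ref{D}) and $Z^*\succeq 0$. Strong duality applied to each pair $(P_{t_n}),(D_{t_n})$ (Theorem \ref{thm:nonempty}) produces the standard complementarity identity $Z_n\bullet X_n=A_0(t_n)\bullet X_n-b(t_n)^T y_n=0$, which in the limit gives $Z^*\bullet X^*=0$.

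The decisive step is to find $y^*\in\R^m$ with $Z^*=A_0-\sum_k y^*_k A_k$. Since $A_0(t_n)-Z_n\in\langle A_1(t_n),\ldots,A_m(t_n)\rangle$ for every $n$, this amounts to continuity of the subspace $\langle A_1(t),\ldots,A_m(t)\rangle$ at $t=0$. In the setting of Theorem \ref{thm:contopt_rank} the rank $\rank\langle A_1(t),\ldots,A_m(t)\rangle$ is constant near $0$, so \cite[Theorem 5.2]{Stewart69} (already invoked in Lemma \ref{thm:contP}) lets us choose the minimum-norm multipliers $y_n$ to remain bounded and to extract a convergent subsequence $y_n\to y^*$ with $A_0-Z^*=\sum_k y^*_k A_k$. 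Then $(y^*,Z^*)$ is feasible for (\ref{P}), and complementarity yields $A_0\bullet X^*=Z^*\bullet X^*+\sum_k y^*_k(A_k\bullet X^*)=b^T y^*$. Weak duality together with $v^*(P)=v^*(D)$ (Theorem \ref{thm:nonempty} at $t=0$) forces $X^*\in U(0)$ and $Z^*\in V(0)$, contradicting $d(X_n,U(0))\geq \epsilon$ along the subsequence $X_n\to X^*$. The $V(t)$ inclusion is obtained symmetrically, starting from an offending sequence $Z_n\in V(t_n)$ and picking auxiliary $X_n\in U(t_n)$.

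The principal obstacle is the construction of the limiting multiplier $y^*$: the subspace $\langle A_1(t),\ldots,A_m(t)\rangle$ must behave continuously at $t=0$, which in general fails, so the argument leans essentially on the rank-constancy condition built into Theorem \ref{thm:contopt_rank}. Without it, the limiting $Z^*$ may lie outside $A_0-\langle A_1,\ldots,A_m\rangle$ and the dual-feasibility step collapses; this is the delicate point that distinguishes the singular setting from the nonsingular one and that justifies the rank hypothesis in the surrounding theorem.
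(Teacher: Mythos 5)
Your proposal is correct and follows essentially the same route as the paper: argue by contradiction along a sequence $t_j\to 0$, use Lemma \ref{lemma:unif_bdd} to get uniform boundedness of the offending optimal $X$'s and $Z$'s, recover multipliers via the pseudoinverse $S(t_j)^\dagger$, extract convergent subsequences, and show the limit point is optimal for (\ref{P}) and (\ref{D}). The one methodological difference is how optimality of the limit is certified: the paper passes the saddle-point inequalities of Lemma \ref{lemma:minimax} to the limit, whereas you pass feasibility and the complementarity identity $Z_n\bullet X_n=0$ (from Theorem \ref{thm:nonempty} at $t_n$) to the limit and close with weak duality; these are interchangeable, and yours is marginally more elementary. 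Where you genuinely add something is the multiplier step: the paper simply writes ``we may assume $(X(t_j),y(t_j),Z(t_j))\to(\tX,\ty,\tZ)$'' without justifying boundedness of $y(t_j)$, while you correctly observe that this requires the rank-constancy of $\langle A_1(t),\ldots,A_m(t)\rangle$ together with \cite[Theorem 5.2]{Stewart69}. You are also right that this is not cosmetic: with $A_0=I_2$, $A_1=E_{11}$, $A_2(t)=tE_{22}$ (so $A_2=O$), $b(t)=(1,t)^T$, all stated hypotheses of the lemma hold ($X_t=I_2$ is strictly feasible and constant, (\ref{P}) is strictly feasible), yet $V(t)=\{O\}$ while $V(0)=\{\mathrm{diag}(0,1)\}$, so the conclusion fails; hence the rank hypothesis you import from Theorem \ref{thm:contopt_rank} is exactly what is needed, and it is available in the only place the lemma is invoked. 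So your proof establishes the statement in the form actually used, and is more careful than the paper's on its one delicate point.
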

 \begin{proof}
By Remark \ref{remark:Todd01},
 \eqref{Dt} and \eqref{Pt} have optimal solutions and the
  same optimal value.  Suppose that the conclusion is false.
 Then there exist $\epsilon>0$, $\{t_j\}$, $X(t_j)\in
  \cU(t_j)$ and $Z(t_j)\in \cV(t_j)$ such that
  $t_j \to 0$ and
\begin{equation}
   d\left(X(t_j), \cU(0)\right) \geq \epsilon,\ d\left(Z(t_j), \cV(0)\right) \geq
    \epsilon,
      \label{eq:contra}
\end{equation}
  for all $j$. Recall that 
  $S(t)$ denotes the matrix $\left(
	 \vec(A_1(t)) \cdots \vec(A_m(t))
	 \right)$. Let
  $y(t_j) = S(t_j)^\dagger (\vec(A_0) - \vec(Z(t_j)))$.
  Then, Lemma \ref{lemma:pinverse} implies that
  the feasible solution $(y(t_j), Z(t_j))$ is optimal for $(P_{t_j})$ for each $j$.
  We define 
  \[
  L(X, y,t) = A_0(t) \bullet X + \sum_{k\in [m]} y_k(b_k(t) - A_k(t)
  \bullet X).
  \]
  Then, we note that  $L(X, y, 0)$ is equal to $L(X, y)$ defined in Lemma \ref{lemma:minimax}. 
  By Lemma \ref{lemma:minimax}, 
 we have
  \[
  L(X(t_j), y, t_j) \leq L(X(t_j), y(t_j), t_j) \leq L(X, y(t_j), t_j),\ \forall (X,y) \in \S_+^n \times \R^m.
 \]
 Since Lemma \ref{lemma:unif_bdd} ensures that $\{(X(t_j), Z(t_j))\}$ is uniformly bounded,
  we may assume that
  \[
  (X(t_j), y(t_j), Z(t_j)) \to (\tX, \ty, \tZ)
  \]
  as $j \to \infty$ for some $(\tX, \ty, \tZ)$.
 Thus we have
\[
   L(\tX, y, 0) \leq L(\tX,\ty, 0) \leq L(X, \ty, 0),\ \forall (X,y) \in \S_+^n \times \R^m.
\] 
By applying Lemma \ref{lemma:minimax} again, $\tX$ and $(\ty, \tZ)$ are
 optimal for  \eqref{P} and \eqref{D} respectively. 
This contradicts  the inequalities \eqref{eq:contra}. \hspace{\fill} \qed
 \end{proof}

  \begin{proof}[Proof of Theorem \ref{thm:contopt_rank}.]
  By Lemma \ref{thm:contP} and \ref{thm:stability},
  we have that for any $\epsilon>0$
  and $X(t) \in \cU(t)$, there
  exist $\eta>0$ and $\tX^t \in \cU(0)$ such
  that for $t\in [0, \eta]$,
  \[
  |A_0(t) \bullet X(t) - A_0 \bullet \tX^t|
  \leq k_1\|X(t) - \tX^t\| + k_2\|A_0(t) - A_0(0)\|< \epsilon
  \]
 for some $k_1, k_2>0$. This completes the proof of Theorem \ref{thm:contopt_rank}. \hspace{\fill} \qed
  \end{proof}

 \begin{corollary}
  \label{thm:contopt}
  If both  \eqref{P} and \eqref{D} are strictly feasible and
  $A_1,\ldots, A_m$ are linearly independent, then the optimal value of \eqref{Dt} varies
 continuously at $t=0$.
 \end{corollary}
\begin{proof}
 By strict feasibility and the linear independence condition, for all sufficiently small $t>0$,
 \eqref{Pt} and \eqref{Dt} are feasible,
 and the rank condition is satisfied. 
  \hspace*{\fill} \qed
\end{proof}

\section{Behavior of a Minimal Face under Perturbations}
\label{section:face_invariant}

In this section, the behavior of a minimal face under
perturbations is investigated.
In particular, we give criteria for perturbations to keep the minimal face invariant.
%
We slightly simplify the situations and consider the following perturbed problem:
\begin{align}\label{Dt4}
&\inf_X\left\{ A_0 \bullet X :  (A_k + E_k(t)) \bullet X = b_k \ (k\in [m]), X \in\Bbb{S}^n_+\right\}, \tag{\text{$D_t$}}
\end{align}
where $E_k(t) = A_k(t) - A_k$ for all $k\in [m]$. We note $E_k(t) \to 0$ as $t\to 0$ since we assume that $A_k(t)$ are continuous at $t=0$ and $A_k(0) = A_k$.
Throughout this section, we assume the following conditions:
  \begin{condition}
   \label{cond:perturb}
  \hspace{1ex}
 \begin{enumerate}
  \item\label{D1} \eqref{D} is feasible, and \eqref{P} is strictly feasible;
  \item\label{D2} $A_1, \dots, A_m$ are linearly independent;
  \item\label{D3} \eqref{Dt4} is feasible for each sufficiently small $t>0$.
 \end{enumerate}
  \end{condition}

We say that $\{\eqref{Dt4}\}_{t\geq 0}$ satisfies the \textit{rank condition} if
there exist an associated nonsingular matrix $Q$ to the minimal face of
	\eqref{D} and $\delta>0$ such that for all $t\in [0, \delta]$,
\[
	 \rank \left( (Q^T(A_1 \!+\! E_1(t))Q)_3, \ldots,
	 (Q^T(A_m\!+\!E_m(t))Q)_3 \right) \\
	 = \rank \left( (Q^TA_1Q)_3, \ldots, (Q^TA_mQ)_3
	 \right), 
\]
where the submatrix $M_3$ for $M\in \Bbb{S}^n$ is
determined by the minimal face of \eqref{D} as in \eqref{eq:partition}.
Here we note that $Q$ in the left hand side does not depend on $t$.
We start with the following lemma.

 \begin{lemma}
  \label{lemma:rank_cond}
  Let $F_{\min}$ and $F^t_{\min}$ be the minimal faces of \eqref{D} and
  \eqref{Dt4} respectively.
  Suppose $\{\eqref{Dt4}\}_{t\geq 0}$ satisfies  the rank condition.
  If there exists $\delta>0$ such that $F^t_{\min}\subset F_{\min}$ for all $t\in [0,\delta]$, 
  we have $F^t_{\min} = F_{\min}$ for all sufficiently small $t> 0$.
 \end{lemma}
\begin{proof}
 By Lemma \ref{freduction},
 the reduced problem \ref{FD2} of \eqref{D} has a strictly feasible
  point which solves 
 $
   \left(Q^T A_k Q\right)_3 \bullet X = b_k \ (k\in [m]),  X\in\Bbb{S}^r_{++}
 $ for some $r>0$,
 where $Q$ is an associated nonsingular matrix to the minimal
 face $F_{\min}$ of \eqref{D}. 
 For each $t \in [0, \delta]$, feasibility of \eqref{Dt4} and
 $F^t_{\min}\subset F_{\min}$ imply that there exists $\tilde{X}\in F_{\min}$ such that $\left(A_k + E_k(t)\right) \bullet \tilde{X} = b_k \ (k\in [m])$. It follows from the representation of $F_{\min}$ with $Q$ 
 that
 \[
  \left(Q^T \left(A_k + E_k(t)\right) Q\right)_3 \bullet X = b_k  \
 (k\in [m]),\
 X\in\Bbb{S}^r_{+}
 \]
 is feasible. Consider the following problem obtained by perturbing \ref{FD2}:
 \begin{align}\label{pertFD}
 \inf_{X}\left\{\left(Q^TA_0Q\right)_3 \bullet X: 
 \left(Q^T \left(A_k + E_k(t)\right) Q\right)_3 \bullet X = b_k  \
 (k\in [m]),\
 X\in\Bbb{S}^r_{+}
 \right\}. 
 \end{align}
 Here, \ref{FD2} has a strictly feasible point, \eqref{pertFD} is feasible, and the rank condition is satisfied. Thus Lemma \ref{thm:contP} implies
 that for each sufficiently small $t>0$, \eqref{pertFD} has a strictly feasible point. It means that $\{X \in \rint F_{\min}: (A_k + E_k(t))\bullet X = b_k\ (k\in [m])\}\neq \emptyset$
 for each sufficiently small $t>0$. 
 Since $F_{\min}$ is a face of $\Bbb{S}^n_+$
  containing $F_{\min}^t$, we have $F_{\min}=F_{\min}^t$
 by Lemma \ref{freduction}. \hspace{\fill} \qed
\end{proof}

%

  \begin{example}   
  Lemma \ref{lemma:rank_cond} does not hold without the assumption
   $F^t_{\min} \subset F_{\min}$.
   In Example \ref{ex:notface}, 
   the perturbation  is
   of the same type as this section is considering.
   Condition \ref{cond:perturb} and the rank condition are satisfied,   
   but the minimal faces $F^t_{\min}$ of \eqref{Dt4} are not equal to
   $F_{\min}$.
   Here $F^t_{\min}$ are not included in $F_{\min}$.
  \end{example}

We first give simple sufficient conditions that can be shown easily.
 \begin{proposition}
  \label{prop:invariant1}
  For a facial reduction sequence $\{(\hy^i, \hU^i, \hV^i)\}_{i=1}^s$
  of \eqref{D},
  let the minimal face of \eqref{D} be $F_{\min}$
  and $\hat{K} = \{k: \hy_k^i = 0 \  (\forall i=1, \ldots, s)\}$.
  Suppose that $\{\eqref{Dt4}\}_{t\geq 0}$ satisfies  the rank
  condition
  and $E_k(t) = O_{n\times n} \ (k \notin \hat{K})$. 
  Then the minimal faces of \eqref{Dt4} are equal to $F_{\min}$
  for all sufficiently small $t>0$. 
 \end{proposition}
 \begin{proof}
 Let $\{F_i\}_{i=1}^s$ be the sequence of faces generated by the facial reduction sequence $\{(\hy^i, \hU^i, \hV^i)\}_{i=1}^s$
  of \eqref{D}.
 Since $E_k(t)=O_{n\times n} $ for all $k\not\in\hat{K}$, we have
  \[
  -\sum_{k\in [m]}\hy^i_k (A_k + E_k(t))
  = -\sum_{k\in [m]}\hy^i_k A_k = \hU^i + \hV^i
  \]
  for $i = 1, \ldots, s$.
  Thus $\{(\hy^i, \hU^i, \hV^i)\}_{i=1}^s$ is a facial reduction sequence  of \eqref{Dt4} up to the $s$-th iteration.
  It is summarized as
 \[
\eqref{Dt4}\quad \S_+^n \overset{(\hy^1, \hU^1, \hV^1)}{\longrightarrow} F_1
   \overset{(\hy^2, \hU^2, \hV^2)}{\longrightarrow} F_2
   \overset{(\hy^3, \hU^3, \hV^3)}{\longrightarrow}
   \cdots
   \overset{(\hy^s, \hU^s, \hV^s)}{\longrightarrow} F_s = F_{\min}.
\]
  Thus the minimal faces of \eqref{Dt4} are contained in $F_{\min}$.
  In addition, since $\{\eqref{Dt4}\}_{t\geq 0}$ satisfies the rank condition,
  it follows from Lemma \ref{lemma:rank_cond} that 
the minimal faces of \eqref{Dt4} are equal to $F_{\min}$ for sufficiently small $t>0$. \hspace{\fill} \qed
 \end{proof}

Next, we will use the positive eigenvectors of reducing certificates to give sufficient conditions for the minimal face to be invariant under a peturbation.
%
 \begin{proposition}
  \label{prop:invariant3}
 Let $\{(\hy^i, \hU^i, \hV^i)\}_{i=1}^s$ be a facial reduction sequence of
  \eqref{D},
  $F_0 = \S_+^n$ and $F_1,\ldots, F_s$ be the generated faces. 
  In addition, let
  \[
   L_i = \Span\{qq^T \colon q \text{ is an eigenvector of $\hU^i$} \\
   \text{associated with a positive eigenvalue}\}.
     \]
 Suppose that $\{\eqref{Dt4}\}_{t\geq 0}$ satisfies the rank condition, and
  for each $i = 1, \ldots, s$, 
 \[
  \sum_{k\in [m]} \hy_k^i E_k(t) +  v^i(t)
  \in
  L_i
 \]
  for some $v^i(t) \in F_{i-1}^\perp$ with $v^i(t) \to O_{n\times n}$ as $t\to 0$.
 Then \eqref{Dt4} have the same minimal face as \eqref{D} for
  all sufficiently
 small $t>0$.  
 \end{proposition}
 Before proceeding to the proof, we present an example and a remark.
  \begin{example}
   \label{ex:invariant}
  The SDP in Example $\ref{ex:notface}$ has
  a facial reduction sequence consisting of
  only one certificate
  $(\hy, \hU, \hV)=\left((0,1,-1,0)^T,\left(\begin{smallmatrix}
						      1 & 0 & 0\\
						      0 & 0 & 0\\
						      0 & 0 & 0 
						     \end{smallmatrix}\right),
  O_{3\times 3}\right)$, and hence
  $L_1 = \Span\{\hU\}$.
   If we perturb the matrices as
   {\small
   \[
 A_1(t) = \left(
 \begin{smallmatrix}
	  3t & 4t & 5t\\
  	  4t & 1 & 0\\
	  5t & 0 & t
 \end{smallmatrix}
 \right),
  A_2(t) = \left(\begin{smallmatrix}
	  0 & 3t & 2t\\
  	  3t & 0 & 1\\
	  2t & 1 & t
	      \end{smallmatrix}
 \right),
 A_3(t) = \left(
 \begin{smallmatrix}
	  1+4t & 3t & 2t\\
  	  3t & 0 & 1\\
	  2t & 1 & t
 \end{smallmatrix}
 \right),  
 A_4(t) = \left(
 \begin{smallmatrix}
	  2t & 5t & 1 + 3t\\
  	  5t & t & -t\\
	  1+3t & -t & 0
 \end{smallmatrix}
 \right),
   \]}%
   then the corresponding $E_i(t)$ satisfy
   $\sum_{k=1}^4 \hy_k E_k(t) = -4t\hU \in L_1$.
   Thus the conditions of Proposition $\ref{prop:invariant3}$ are satisfied, and
   the minimal face is invariant under the perturbation
   for sufficiently small $t>0$.
   In fact, since the minimal face of \eqref{Dt4} is contained in $\S^n_+\cap \{\hU\}^\perp$ and 
   $\left(\begin{smallmatrix}0 & 0 & 0\\
   0 & 2-t & 1-t/2\\
   0 & 1-t/2 & 1\end{smallmatrix}\right)$ 
   is a feasible point, the minimal face of \eqref{Dt4} is equal to $F_{\min}$ in Example \ref{ex:notrank}.
   More generally, to apply Proposition \ref{prop:invariant3}, 
   it suffices that we choose $E_i(t)$ such that \eqref{Dt4} are feasible, the rank condition holds, and $E_2(t) - E_3(t) = \alpha_t\hU$ for some $\alpha_t \in \R$.     
  \end{example}
  \begin{remark}
  In particular,
  the inclusion in Proposition
  \ref{prop:invariant3} holds if we have
 \[
 -\sum_{k\in [m]} \hy_k^i E_k(t) \in
 \alpha^i(t)\hU^i + F_{i-1}^\perp,
 \]
  with $\alpha^i(t)\to 0$ as $t \to 0$ for each $i=1, \ldots, s$.
 \end{remark}  
   \begin{proof}[Proof of Proposition $\ref{prop:invariant3}$]
      Since $\{(\hy^i,\hU^i,\hV^i)\}_{i=1}^s$ is a facial reduction sequence of \eqref{D}, we have that $\hU^i\in \S_+^n$, $\hV^i
  \in F_{i-1}^\perp$ and that
  $-\sum_k\hy^i_kA_k = \hU^i + \hV^i \notin F_{i-1}^\perp$ for each $i=1,\ldots, s$.
  
  Let us fix $i$. 
  Let $\{q_l\}$ be the set of the eigenvectors of $\hU^i$
  that are associated with positive eigenvalues, orthogonal to
  each other, and $\|q_l\| = 1$. Then every matrix in $L_i$ can be written
  as a linear combination of $q_l{q_l}^T$.
  By the assumption, 
  there exist
  $\alpha_{l}(t)\in \R$ and $v(t)\in F_{i-1}^\perp$
  such that $-\sum_k \hy_k^i E_k(t) = \sum_l\alpha_{l}(t)q_l{q_l}^T +
    v(t)$ and $v(t)\to O_{n\times n}$.
  Since $\sum_k \hy_k^i E_k(t) \to O_{n
    \times n}$ 
    as $t\to 0$
   and $\{q_lq_l^T\}$ is linearly independent,    
  we have $\alpha_{l}(t)\to 0$ for each $l$. 
  We set
  \[
   U^i = \hU^i + \sum_l\alpha_{l}(t)q_l{q_l}^T,\quad 
  V^i = \hV^i + v(t)
  \]
    Then $V^i \in F_{i-1}^\perp$.
    Since $\hU^i$ can be written as $\sum_l\lambda_lq_l{q_l}^T$,
    where $\lambda_l$ is the positive eigenvalue of $\hU^i$
    corresponding to $q_l$, 
  we see that $U^i\in \S_+^n$ for all sufficiently small $t>0$. 
  Thus we have
  \[
  -\sum_k \hy_k^i \left( A_k + E_k(t) \right) = \hU^i + \hV^i + \sum_\ell \alpha_\ell(t)q_\ell q_\ell^T + v(t) = U^i + V^i.
  \]
  Since $\hU^i + \hV^i \notin F_{i-1}^\perp$ by the definition of the facial reduction sequence and
  $F_{i-1}^\perp$ is closed,
  we also have $U^i + V^i \notin F_{i-1}^\perp$ for all
  sufficiently small $t>0$.
  In addition, we obtain that
\begin{align*}
  F_{i-1} \cap \left\{U^i\right\}^\perp
 & = F_{i-1} \cap \Big\{ \hU^i + \sum_l\alpha_{l}(t)
 q_{l}q_{l}^T\Big\}^\perp\\
 &  = F_{i-1} \cap \Big\{ \sum_l(\lambda_l + \alpha_{l}(t)
 )q_{l}q_{l}^T\Big\}^\perp
  = F_{i-1} \cap \Big\{\hU^i\Big\}^\perp = F_i.
\end{align*}

Therefore, we have shown that $\{U^i\}_{i=1}^s$ also generates
  the faces $F_1,\dots,F_s$ 
  and that $(\hy^i,U^i,V^i)$ is a reducing certificate of \eqref{Dt4} at the $i$-th iteration for each $i=1, \ldots, s$.
  Thus $F_s$ contains
  the minimal face of \eqref{Dt4} for each sufficiently small $t>0$.
 In addition, since $\{\eqref{Dt4}\}_{t\geq 0}$ satisfies the rank condition,
    Lemma
    \ref{lemma:rank_cond} implies that
    the minimal face of \eqref{Dt4} is equal to $F_s$ for each sufficiently small $t>0$. \hspace*{\fill} \qed
   \end{proof}


\section{Application to a Control Problem}\label{sec:example}

\subsection{\textbf{A Singular SDP in $H_\infty$ State Feedback Control Problem}}\label{subsec:example}
We present a singular SDP arising from $H_\infty$ state feedback control problem. The $H_{\infty}$ control problem is one of the most successful applications
of SDP and is the problem for designing a controller that  achieves stabilization with some guaranteed performance based on the $H_\infty$ norm. In particular, the $H_{\infty}$ state feedback control problem is a special case of the $H_{\infty}$ control problem. See, e.g., \cite{Iwasaki94,Scherer06} for the detail on the SDP formulation. 

In this section, we deal with the following SDP problem: 
\begin{align}
\label{LMI}
&\sup\left\{ -\gamma: \begin{pmatrix}
   -\He(AY_1 + B_2Y_2) &  &  \\
   -C_1Y_1 - D_{12}Y_2       & \gamma I_2 &  \\
   -B_1^T                & -D_{11}^T & \gamma I_{2}
 \end{pmatrix}\in \S_+^6, \begin{array}{l}
 Y_1\in \S_+^2, \\
 Y_2\in \R^{2\times 2},\\
  \gamma\in\Bbb{R}
  \end{array}\right\},\tag{P0}
\end{align}
where $\He(X) = X+ X^T$ for $X\in\Bbb{R}^{n\times n}$ and the blanks in the matrices stand for the transpose of the lower triangular block part. Also,  the matrices $A$, $B_1$, $B_2$, $C_{1}$, $D_{11}$ and $D_{12}$ are defined  as follows:
\begin{equation}
 \left(
 \begin{array}{c|c|c}
  A & B_1 & B_2 \\
  \hline
  C_1 & D_{11} & D_{12}
 \end{array}\right)
 = \left(
\begin{array}{cc|cc|c}
 -1 & -1 & -1 & -1 & 0 \\
 1  &  0 & -1 &  0 & 1 \\
 \hline
  2  & -1 & -1 & 0  & 2 \\
 -1  & 2  & -1 & 0  & -1
\end{array}
 \right).  
\label{eq:control_matrix}
\end{equation}
Its dual can be formulated as follows:
\begin{align}
\label{DUAL}
&\inf\left\{
-\begin{pmatrix}
O&&\\
O&O&\\
B_1^T&D_{11}^T &O 
\end{pmatrix} \bullet Z
: \begin{array}{l}
\He(A^TZ_{11}+C_1^TZ_{21})\in\mathbb{S}^{2}_+, \\
I_p\bullet Z_{22} + I_{m_1}\bullet Z_{33} = 1, \\
B_2^TZ_{11}+D_{12}^TZ_{21}=O, \\
Z=\left(Z_{ij}\right)_{1\le i, j\le 3}\in\mathbb{S}^{
6}_+
\end{array} 
\right\}. 
\end{align}
%
%
To adjust our SDP problem of interest to the form of \eqref{P}, we define the coefficient matrices $A_k$ \ $(k\in [6]\cup\{0\})$ and vector $b$ by 
$
A_k = \left(\begin{smallmatrix}
A_{k, 1} & O\\
O & A_{k, 2}
\end{smallmatrix}\right)$ and $b = \begin{pmatrix}
0 & 0 & 0 & 0 & 0 & -1
\end{pmatrix}^T$, where $A_{k, 1}\in\mathbb{S}^6$ and $A_{k, 2}\in\mathbb{S}^2$ for all $k$. Furthermore, we rewrite variables $Y_1$, $Y_2$,  $\gamma$ as follows:
\[
Y_1 = \begin{pmatrix}
y_1 & y_2 \\
y_2 & y_3
\end{pmatrix}, Y_2 = \begin{pmatrix}
y_4 & y_5
\end{pmatrix}, y_6 = \gamma. 
\]
It follows from \cite[Theorems 3.3 and 3.5]{Waki15} that
\eqref{LMI} is strictly feasible but its dual problem is weakly feasible.
 Thus we can say that \eqref{LMI} is singular. 
 
We compare computational results on \eqref{LMI} with the following  three perturbed SDPs for \eqref{LMI}: For $\epsilon = $1.0e-16, 
\begin{enumerate}[label=(P\arabic*),leftmargin=*]
\item SDP obtained by perturbing the $(2, 2)$nd element of $A_{5, 1}$  into $-2(1+\epsilon)$,\label{p1}
\item SDP obtained by perturbing  the $(2, 3)$rd and $(3, 2)$nd elements in $A_{5, 1}$  into  $-2(1+\epsilon)$, and   \label{p2}
\item SDP obtained by perturbing the $(2, 4)$th and $(4, 2)$nd elements of $A_{5, 1}$ into $1+\epsilon$.  \label{p3}
\end{enumerate} 
We apply SDPA-GMP \cite{SDPA} to solve \eqref{LMI} to \ref{p3} with stopping tolerances $\delta$ ($\delta$=1.0e-10, 1.0e-30 and 1.0e-50) and  set the floating point computation to  approximately 300 significant digits. 
We set  {\ttfamily maxIteration} = 10000 and {\ttfamily betaStar} = {\ttfamily betaBar} = {\ttfamily gammaStar} = 0.5 for parameters of SDPA-GMP. See \cite{SDPA} for more details on parameters. Table \ref{results} shows the numerical results. We observe the following:

\begin{table}[htbp]
\caption{Computed values for  \eqref{LMI},  its perturbed problems \ref{p1}, \ref{p2} and \ref{p3}}
\centering
\begin{tabular}{|c|c|c|c|}
\hline
& $\delta=$1.0e-10 &$\delta=$1.0e-30 &$\delta=$1.0e-50 \\
\hline
\eqref{LMI} &-2.2360679775444764  &-2.2360679774997897 &-2.2360679774997897 \\
\ref{p1} &-2.2360072694172072  &-2.1078335768712432 & -1.4142135623730950\\
\ref{p2} &-2.2360072694172055  &-2.0000000000000000 & -2.0000000000000000\\
\ref{p3} &-2.2360072665294605  &-1.4142135623730950 &-1.4142135623730950\\
\hline
\end{tabular}
\label{results}
\end{table}%

\begin{itemize}
\item The computed values of \eqref{LMI} are almost same
      for all $\delta$, whereas the values for perturbed problems \ref{p1}, \ref{p2} and \ref{p3} are different.  These significant differences imply that one needs to choose suitable
      tolerances $\delta$ in order to use the floating point computation
      with longer significant digits for singular SDPs.
      
\item We can verify that the optimal value of  \eqref{LMI} is $-\sqrt{5}$, while the
      optimal value of the perturbed problem \ref{p1} is $-\sqrt{2}$. 
      These differences  show that a small perturbation of coefficient matrices $A_k$ in \eqref{LMI} may yield a  significant change of the optimal value of \eqref{LMI}. 
\end{itemize}

\subsection{\textbf{Behavior of Minimal Faces under Perturbations for Our Example}}\label{example:invariant}
We show that matrix-wise perturbations make the minimal face of the dual problem of
  \eqref{LMI} invariant or full-dimensional, i.e., $\Bbb{S}^8_+$. 

 Let $A = \left(\begin{smallmatrix}
	   a_{11} & a_{12} \\
	   a_{21} & a_{22}
      \end{smallmatrix}\right)$, 
 $B_2 = \left(\begin{smallmatrix}
	 b_1 \\
	 b_2
 \end{smallmatrix}\right)$, 
 $C_1 = \left(\begin{smallmatrix}
	 c_{11} & c_{12} \\
	 c_{21} & c_{22}
	\end{smallmatrix}\right)$, 
 $D_{12} = \left(\begin{smallmatrix}
 	    d_{11}\\
	    d_{21}
	   \end{smallmatrix}\right)$, and let $B_1$ and $D_{11}$ be the
  same matrices as in \eqref{eq:control_matrix}. Then the first 
  constraint in  \eqref{LMI} 
  means that 
{\footnotesize
 \[
 \begin{pmatrix}
  -2a_{11}y_1 - 2a_{12}y_2- 2b_1y_4 &  \\
  -a_{21}y_1 - (a_{11} + a_{22})y_2
  - a_{12} y_3 - b_2 y_4 - b_1 y_5
  & - 2a_{21} y_2 - 2a_{22} y_3 - 2b_2 y_5 & \\
  -c_{11}y_1 - c_{12}y_2 - d_1y_4 & -c_{11}y_2 - c_{12}y_3 - d_1y_5
  & y_6 \\
   -c_{21}y_1 - c_{22}y_2 - d_2y_4 & -c_{21}y_2 - c_{22}y_3 - d_2 y_5
  & 0 & y_6 \\
  1 & 1 & 1 & 1 & y_6  \\
  1 & 0 & 0 & 0 & 0 & y_6
 \end{pmatrix}
 \]
  }
  is contained in $\S_+^6$.
The related part with $a_{11}$ in the above matrix can be extracted as
 \begin{align*}
a_{11}
 \left(\begin{smallmatrix}
  -2y_1 & -y_2 & 0 & 0 & 0 & 0 \\
  -y_2 & 0 & 0 & 0 & 0 & 0 \\
  0 & 0 & 0 & 0 & 0 & 0 \\
  0 & 0 & 0 & 0 & 0 & 0 \\
  0 & 0 & 0 & 0 & 0 & 0 \\
  0 & 0 & 0 & 0 & 0 & 0
 \end{smallmatrix}\right)&=a_{11}y_1\left(\begin{smallmatrix}
  -2 &0 & 0 & 0 & 0 & 0 \\
  0 & 0 & 0 & 0 & 0 & 0 \\
  0 & 0 & 0 & 0 & 0 & 0 \\
  0 & 0 & 0 & 0 & 0 & 0 \\
  0 & 0 & 0 & 0 & 0 & 0 \\
  0 & 0 & 0 & 0 & 0 & 0
 \end{smallmatrix}\right) +a_{11}y_2 \left(\begin{smallmatrix}
  0 & -1 & 0 & 0 & 0 & 0 \\
  -1 & 0 & 0 & 0 & 0 & 0 \\
  0 & 0 & 0 & 0 & 0 & 0 \\
  0 & 0 & 0 & 0 & 0 & 0 \\
  0 & 0 & 0 & 0 & 0 & 0 \\
  0 & 0 & 0 & 0 & 0 & 0
  \end{smallmatrix}\right)\\
  &=:a_{11}(y_1 E_{1,1} + y_2 E_{2,1}). 
 \end{align*}
 Since a perturbation on $a_{11}$ affects the coefficient  matrices of $y_1$ and $y_2$, the corresponding perturbing matrices are
 $E_1(t)=\left(\begin{smallmatrix}
 tE_{1,1} & O \\
 O&O_{2\times 2}\end{smallmatrix}\right)$, $E_2(t) =\left(\begin{smallmatrix}
 tE_{2,1} & O \\
 O&O_{2\times 2}\end{smallmatrix}\right)$
  and $E_k(t) = O_{8\times 8}\ (k =
  3, \ldots, 6)$.
  We remark that we need to consider block matrices with two blocks for the perturbation because the coefficient matrices for $y_1$ also appear in the constraint $Y_1 \in \S^2_+$ of \eqref{LMI}.  
  
  Consider the problem \eqref{Dt4} perturbed with $\{E_k(t)\}$.
  Then one can verify that the length of  the facial reduction sequence
  for \eqref{Dt4} is one and that it is $\{(y, U, V)\}$, where
  \begin{align}\label{eq:ex1}
&\left\{
\begin{array}{l}
 y = (1, 0, 0, -1, 0, 0)^T, U = \begin{pmatrix}
 U_1 & O\\
 O& U_2
 \end{pmatrix}, V = \begin{pmatrix}
 V_1 & O\\
 O& V_2
 \end{pmatrix},\\
 V_1 =O_{6\times 6}, 
 V_2=O_{2\times 2}, U_1= \begin{pmatrix}
1 & 0^T\\
0 & O_{5\times 5}
\end{pmatrix}, U_2 = \begin{pmatrix}
		      1 & 0\\
		      0 & 0
		     \end{pmatrix}.
		     \end{array}
		     \right.   
\end{align}
  Let $e_1\in\Bbb{R}^6$ and $f_1\in\Bbb{R}^2$ be the unit vectors whose first entry is $1$ and others are
  zero. Then the positive eigenvalues of $U$ are $2, 1$, and
  the associated eigenvectors
  are $(e_1, 0_2^T)^T$, $(0_6^T, f_1^T)^T$ respectively. Here $0_p$ is the $p$-dimensional zero vector for a given positive integer $p$. 
  Since we have that
  \[
  -\left( 1 \cdot E_1(t) + 0 \cdot E_2(t) \right)
  \in\Span\left\{\left(\begin{smallmatrix}e_1{e_1}^T & O \\ O & O_{2\times 2}\end{smallmatrix}\right), \left(\begin{smallmatrix} O_{6\times 6} & O \\ O & f_1{f_1}^T\end{smallmatrix}\right)
  \right\}
  \]  
  and that $\{\eqref{Dt4}\}_{t\geq 0}$ satisfies the rank condition,
  Proposition \ref{prop:invariant3} implies that this perturbation does not
 change the minimal face of the dual problem.


We can apply similar arguments to see behavior of the  minimal face of the dual problem for the other perturbations and observe the followings: 
\begin{itemize}
\item The minimal face is invariant under the matrix-wise perturbation with respect to $a_{11}$, $a_{12}$, $a_{22}$, $c_{12}$, $c_{22}$ and $b_1$. The optimal value of \eqref{Dt4} changes continuously at $t = 0$ due to Theorem
3.1. 
\item The other perturbations, i.e. $a_{21}$, $c_{11}$, $c_{21}$, $b_2$ $d_1$ and $d_2$,  make the minimal face 
  of the dual problem to be $\Bbb{S}^8_+$,   which implies that the perturbed problem is strictly
  feasible.  However, we have numerically
confirmed that the optimal value of \eqref{Dt4} also varies continuously in this case. It is a future study to find other
conditions that ensure the
continuity of the optimal value under any matrix-wise perturbations.
  \item  Hence if we perturb matrices $A$, $B_2$, $C_1$ and
  $D_{12}$  in the structured form, the minimal face may be
  different,
  but can not  be smaller.
\end{itemize}


\section{Conclusions}\label{sec:conclusion}
We consider perturbations of the coefficient matrices of a semidefinite program,
in the case that the primal problem is strictly feasible and the dual problem is weakly
feasible.
We give sufficient conditions for continuity of the optimal value.
These conditions involve the behavior of the minimal faces of the perturbed dual problems
and the submatrices of the coefficient matrices associated with the minimal faces.
By using examples, it is argued that these conditions are hard to
remove.
We further obtain sufficient conditions for the perturbations to keep
the minimal face invariant.
A facial reduction sequence, which is obtained in the process of facial
reduction, plays the central role.
Then our results are applied to a semidefinite program obtained from
an $H_\infty$ control problem.
By presenting numerical experiments with interior point methods,
we also discuss the importance of computations with arbitrary precision arithmetic, together with an appropriate
parameter for the stopping criteria,
in order to obtain an approximation to the optimal value of a singular semidefinite program.

In the future work, it is worth considering to use a facial reduction sequence to
analyze other properties of a semidefinite program.
In addition, it may be interesting to find combinatorial structures
in the elements of perturbing matrices that preserve the minimal face of
a semidefinite program.

\section{Acknowledgements}

The first author was supported by  JSPS KAKENHI Grant Number JP15K04993 and JP19K03631.
A part of his work was done when he stayed in University
of Konstanz with the financial support from Tokyo University of Marine
Science and Technology.  
The second author was supported by JSPS KAKENHI Grant Numbers
JP22740056, JP26400203, JP17H01700, JP20K11696, and ERATO HASUO
Metamathematics for Systems Design Project (No.JPMJER1603), JST.
We would like to thank  Noboru
Sebe in Kyushu Institute of Technology for fruitful discussions and
significant comments for $H_{\infty}$ control
problems.

\end{document}